 \newtheorem{definition}{Definition}[section]
 \newtheorem{lemma}[definition]{Lemma}
 \newtheorem{proposition}[definition]{Proposition}
 \newtheorem*{theorem*}{Theorem}
\newtheorem*{proposition*}{Proposition}
\newtheorem*{lemma*}{Lemma}
 \theoremstyle{remark}
  \newtheorem*{acknowledgements}{Acknowledgements}
\newcommand{\op}[1]{\operatorname{#1}}
\newcommand{\Tr}{\ensuremath{\op{Tr}}}
\newcommand{\C}{\ensuremath{\mathbb{C}}} 
\newcommand{\bH}{\ensuremath{\mathbb{H}}} 
\newcommand{\N}{\ensuremath{\mathbb{N}}} 
\newcommand{\R}{\ensuremath{\mathbb{R}}} 
\newcommand{\Z}{\ensuremath{\mathbb{Z}}}
\newcommand{\fa}{\ensuremath{\mathfrak{a}}}
\newcommand{\fg}{\ensuremath{\mathfrak{g}}}
\newcommand{\fh}{\ensuremath{\mathfrak{h}}}
\newcommand{\fk}{\ensuremath{\mathfrak{k}}}
\newcommand{\fl}{\ensuremath{\mathfrak{l}}}
\newcommand{\fm}{\ensuremath{\mathfrak{m}}}
\newcommand{\fp}{\ensuremath{\mathfrak{p}}}
\newcommand{\cF}{\ensuremath{\mathcal{F}}}
\newcommand{\cJ}{\ensuremath{\mathcal{J}}}
\newcommand{\cL}{\ensuremath{\mathcal{L}}}
\newcommand{\cN}{\ensuremath{\mathcal{N}}}
\newcommand{\cS}{\ensuremath{\mathcal{S}}}
\newcommand{\End}{\ensuremath{\op{End}}}
\newcommand{\codim}{\op{codim}}
\newcommand{\Ad}{\op{Ad}}
\renewcommand{\Box}{\square}
\newcommand{\dbar}{\overline{\partial}}
\newcommand{\dbarb}{\overline{\partial}_{H}}
\newcommand{\Boxb}{\Box_{b}}
\begin{document}
 \begin{abstract}
     The aim of this paper is to present the construction, out of the Kohn-Rossi complex, of a new hypoelliptic operator $Q_{L}$ on almost CR manifolds equipped with a real structure. 
     The operator acts on all $(p,q)$-forms, but when restricted to $(p,0)$-forms and $(p,n)$-forms it is a sum of squares up to sign factor and lower order terms. 
     Therefore, only a finite type condition condition is needed to have hypoellipticity on those forms. However, outside these forms $Q_{L}$
    may fail to be hypoelliptic, as it is shown in the example of the Heisenberg group $\bH^{5}$.
\end{abstract}

\title{A New Hypoelliptic Operator\\ on Almost CR Manifolds} 

\author{Rapha\"el Ponge}

\address{Department of Mathematics, University of Toronto, 40 St George Street, Toronto, ON, M5S 2E4, Canada.}
% \email{ponge@math.utoronto.ca}
\address[Current Address]{Graduate School of Mathematical Sciences, University of Tokyo, 3-8-1 Komaba, Meguro-ku,
Tokyo 153-8914, Japan.}
\email{ponge@ms.u-tokyo.ac.jp}
\thanks{Research partially supported by NSERC Discovery Grant 341328-07 (Canada) and 
by a New Staff Matching Grant of the Connaught Fund of the University of Toronto (Canada)}
 \keywords{Hypoelliptic operators, $\overline{\partial}_{b}$-operator, finite type condition, CR structures, contact geometry, pseudodifferential operators.}
\subjclass[2000]{Primary 35H10; Secondary 32W10, 32V35, 32V05, 53D10, 35S05}
\maketitle 
\numberwithin{equation}{section}

\maketitle 

 \section*{Introduction}
Homogeneity reasons prevent natural operators on CR manifolds to be elliptic, but they can be hypoelliptic in various other guises. An important 
example is the Kohn Laplacian: under suitable geometric conditions (i.e., $Y(q)$-condition) the Kohn Laplacian is maximal 
hypoelliptic and hypoelliptic with a gain of 1 derivative (see~\cite{Ko:BCM}, \cite{FS:EDdbarbCAHG}, \cite{BG:CHM}), but in general it 
may have rather subtle hypoelliptic properties (see \cite{Ch:RPdEWPCMD3}, \cite{Ch:OdE3DCRM}, \cite{FK:HEDCD23CRM}, 
\cite{FKM:HECRMDLF}, \cite{Ko:MSHETCROBL}, \cite{Ko:EdCPCRM}, \cite{NS:dCDBCn}). 

 The aim of this paper is to present the construction of a new hypoelliptic operator on almost CR manifolds, that is, manifolds $M$ together with a 
subbundle $H\subset TM$ which is equipped with an almost complex structure $J\in \End H$, $J^{2}=-1$. This construction is partly inspired by the 
second order signature operator of Connes-Moscovici~\cite{CM:LIFNCG} and an earlier version on 3-dimensional contact manifolds was presented in~\cite{Po:CRAS2}.

In order to construct our operator it is crucial to further assume that the horizontal subbundle $H$ admits a real structure, i.e., there exists a subbundle $L\subset H$ so that $H=L\oplus JL$. 
This  implies the vanishing of the first Chern class of $H$, and so an almost CR manifold does not admit a real structure in 
general. However, as explained in Section~\ref{sec.CRARS}, there is a handful of interesting examples of CR manifolds which do admit a real 
structure. Among these are real hypersurfaces of $\C^{n+1}$ that are rigid in the sense of~\cite{BRT:CRSGAECRF}, nilpotent Lie groups and CR nilmanifolds, some 
CR symmetric spaces in the sense of~\cite{KZ:SCRM}, and contact manifolds equipped with a Legendrian subbundle, including circle bundles associated to the geometric 
quantization of symplectic manifolds.

The existence of a real structure allows us to define a chirality operator which is analogous to the Hodge $\star$-operator and 
maps $(p,q)$-forms to $(n-p,n-q)$-forms (where $n$ is the complex dimension of $\dim H$). We then can define a second order differential operator by 
letting 
\begin{equation*}
    Q_{L}=(\bar\partial_{H}^{*}\bar\partial_{H} - 
     \bar\partial_{H}\bar\partial_{H}^{*}) - \gamma (\bar\partial_{H}^{*}\bar\partial_{H} - 
     \bar\partial_{H}\bar\partial_{H}^{*})\gamma,
%     \label{¥}
\end{equation*}
where $\dbarb$ is the horizontal $\dbar$-operator of Kohn-Rossi~(\cite{KR:EHFBCM}, \cite{Ko:BCM}). This operators acts on $(p,q)$-forms and anticommutes with the chirality operator 
$\gamma$. 

When restricted to $(p,0)$-forms 
and $(p,n)$-forms $Q_{L}$ agrees with a sum of squares up to sign factor and lower order terms (see Proposition~\ref{prop:QL.local-form}). 
Therefore, whenever $M$ has finite type, on these forms $Q_{L}$ is maximal hypoelliptic, which in this context implies that the operator is hypoelliptic with gain of 
one derivative (see Section~\ref{sec:hypoelliticity}). In fact, it even admits a parametrix in the class of singular-integral operators of 
Rotschild-Stein~\cite{RS:HDONG} and, when $\codim H=1$, it further has a parametrix in the Heisenberg calculus of Beals-Greiner~\cite{BG:CHM} and 
Taylor~\cite{Ta:NCMA}. Notice that in order for all these properties to hold only the finite type condition is needed. In particular, when $\codim H=1$ we may allow $(M,H)$ to be weakly 
pseudoconvex. 
% When $\codim H=1$ the finite type condition is equivalent to 
% the non-vanishing of the Levi form, so in particular we may allow $M$ to 
% be weakly pseudoconvex in order to have hypoellipticity. %Moreover, if $M$ further has dimension 3 then $Q_{L}$ is hypoelliptic everywhere. 

The hypoelliptic properties of $Q_{L}$ contrast with that of the Kohn Laplacian. For instance, on strictly pseudoconvex CR manifolds 
$Q_{L}$ is hypoelliptic precisely in bidegrees where the Kohn Laplacian is not. In particular, in dimension 3 the operator $Q_{L}$ is 
hypoelliptic in every bidegree, while the Kohn Laplacian is hypoelliptic in none. In addition, on weakly pseudoconvex CR manifolds 
that are not strictly pseudoconvex $Q_{L}$ may be maximal hypoelliptic, while the Kohn Laplacian may not. 

On the other hand, outside $(p,0)$-forms and $(n,0)$-forms the operator $Q_{L}$ may fail to be hypoelliptic. This fact is illustrated in 
Section~\ref{sec:non-hypoellipticity}, where we look at the operator $Q_{L}$ on the 5-dimensional Heisenberg group $\bH^{5}$. In this setting we explicitly construct a 
$(0,1)$-form which annihilates $Q_{L}$ but is singular at the origin. This shows that $Q_{L}$ is not hypoelliptic on 
$(0,1)$-forms. Similar arguments also show that $Q_{L}$ is not hypoelliptic on $(1,1)$-forms or on $(2,1)$-forms either. Therefore, in the case of $\bH^{5}$ 
this is only on $(p,0)$-forms and $(p,n)$-forms that $Q_{L}$ is hypoelliptic.

This paper is organized as follows. 
In Section~\ref{sec.CRARS}, we present the main definitions and examples regarding real structures on almost CR 
manifolds. 
In Section~\ref{sec:chirality}, we construct the 
chirality operator $\gamma$ mentioned above. 
In Section~\ref{sec:QL}, we construct the operator $Q_{L}$ and we derive a local expression which shows that on $(p,0)$-forms and $(p,n)$-forms $Q_{L}$ 
is a sum of squares up to sign factor and lower order terms. 
In Section~\ref{sec:hypoelliticity}, we study the hypoellipticity properties of $Q_{L}$ on $(p,0)$-forms 
and $(p,n)$-forms. 
In Section~\ref{sec:non-hypoellipticity}, we look at the operator $Q_{L}$ on the Heisenberg group $\bH^{5}$ and illustrate on this example the fact that $Q_{L}$ 
may fail to be hypoelliptic outside forms of bidegree $(p,0)$ and $(p,n)$. 

\begin{acknowledgements} 
   I am very grateful to Olivier Biquard, Louis Boutet de Monvel, Alain Connes, Charlie Epstein, Henri Moscovici, Xiang Tang and Alan Weinstein for helpful and stimulating 
    discussions related to the subject matter of this paper. 
\end{acknowledgements}

\section{Real structures on CR manifolds}\label{sec.CRARS}
Let $M$ be an almost CR manifold, i.e., $M$ is equipped with a subbundle $H\subset TM$ carrying an almost complex  structure $J 
\in C^{\infty}(M,H)$, $J^{2}=-1$. This gives rise to a CR structure when $J$ is integrable, i.e., the subbundle $T_{1,0}:=\ker 
(J+i)\subset T_{\C}M$ is integrable in Froebenius' sense. 

We don't assume $H$ to have codimension 1, that is, $(M,H)$ need not be of 
hypersurface type. In any case $n:=\frac{1}{2}\dim H$ is an integer, called the \emph{CR dimension} of $M$. 

In addition, we shall say that $M$ is of \emph{finite 
type}, when H\"ormander's bracket condition is satisfied, i.e., at every point $TM$ is spanned by successive Lie brackets $[X_{1},[X_{2},[\ldots,X_{m}]\ldots]]$ 
of vectors fields with values in $H$.

\begin{definition}
     A real structure on $H$ is given by the datum of a rank $n$ real subbundle $L\subset H$ such that 
      \begin{equation}
H=L\oplus \cJ L.
          \label{eq:CR.def-real-sructure}
      \end{equation}
%      This real structure is said to be orthogonal when the above decomposition is orthogonal with respect to $g_{H}$.
 \end{definition}
 
If $L$ is real structure on $H$, then the decomposition~(\ref{eq:CR.def-real-sructure}) yields an involution $X\rightarrow \underline{X}$ on the fibers of $H$ defined by
%  for sections $X_{1}$ and $Y_{1}$ of $L$ we have 
\begin{equation}
    \underline{X_{1}+\cJ Y_{1}}=X_{1}-\cJ Y_{1} \qquad \forall X_{1},Y_{1}\in C^{\infty}(M,L).
         \label{eq:CR.involution}
\end{equation} 
Notice that  $L=\ker (\underline{.}-1)$ and $\cJ L=\ker 
(\underline{.}+1)$. Conversely, if $\iota$  is an involutive section of $\End_{\R}H$ anticommuting with $\cJ$, then the subbundle 
$L:=\ker (\iota-1)$ defines a real structure on $H$.

Let us now look at some examples of real structures. %on almost CR manifolds. 

\subsubsection{Rigid real hypersurfaces} 
Let us denote $(z^{1},\ldots,z^{n},w)$ the complex coordinates on $\C^{n+1}$. In terms of real and imaginary parts we 
shall write $z^{j}=x^{j}+iy^{j}$, $j=1,\ldots,n$, and $w=u+iv$. Consider a real hypersurface of the form,
\begin{equation}
    M=\{ v=F(z,\bar{z})\},
%     \label{¥}
\end{equation}
where $F(z,\bar{z})$ is some real-valued function. In the terminology of~\cite{BRT:CRSGAECRF} such a hypersurface is said to be \emph{rigid}. Examples of 
such hypersurfaces are given by the hyperquadrics 
$Q_{p,q}^{2n+1}:=\{ v= \sum_{j=1}^{p}|z^{j}|^{2}-\sum_{j=p+1}^{q}|z^{j}|^{2}\}$, $p+q=n$. 
%     \label{¥}
% \end{equation}

We equip $M$ with the CR structure induced by the complex structure of $\C^{n+1}$, i.e., the differential $J_{0}$ of the multiplication by $i$ on 
$T\C^{n+1}$. Therefore we have $H=TM\cap J_{0}(TM)$ and the complex structure of $H$ is just $J=J_{0|M}$. Then the CR tangent bundle $T_{1,0}:=\ker(J-i)$ agrees 
with $T^{1,0}\C^{n+1}\cap T_{\C}M$. In particular, a global frame of $T_{1,0}$ is given by the vector fields, 
\begin{equation}
    Z_{j}=\frac{\partial}{\partial z^{j}}+i \partial_{z^{j}} F(z,\bar{z})\frac{\partial}{\partial w} \qquad j=1,\ldots,n.
%     \label{¥}
\end{equation}

For $j=1,\ldots,n.$ set $Z_{j}=X_{j}-iY_{j}$, where $X_{j}$ and $Y_{j}$ denote the real and imaginary parts of $Z_{j}$, i.e.,
\begin{equation}
    X_{j}=\frac{\partial}{\partial x^{j}}+ \partial_{y^{j}} F \frac{\partial}{\partial u}+ \partial_{x^{j}} F \frac{\partial}{\partial v}, \qquad 
    Y_{j}=\frac{\partial}{\partial y^{j}} -\partial_{x^{j}}F\frac{\partial}{\partial u}+ \partial_{y^{j}}F\frac{\partial}{\partial v}.
%     \label
\end{equation}
Then the vector fields $X_{1},\ldots,X_{n}$ and $Y_{1},\ldots,Y_{n}$ form a frame of $H$ such that $JX_{j}=Y_{j}$. Therefore, if we let $L$ denote the subbundle 
spanned by $X_{1},\ldots, X_{n}$ then $H=L\oplus JL$, that is, $L$ defines a real structure on $H$.

\subsubsection{Nilpotent Lie groups and CR nilmanifolds} Let $\bH^{2n+1}$ denote the $(2n+1)$-dimensional  
Heisenberg group. We realize $\bH^{2n+1}$ as $\R \times \R^{2n}$ equipped with the 
group law, 
\begin{equation}
    x.y=(x^{0}+y^{0}+\sum_{1\leq j\leq n}(x^{n+j}y^{j}-x^{j}y^{n+j}),x^{1}+y^{1},\ldots,x^{2n}+y^{2n}).
    \label{eq:real-structures.Heisenberg-law}
\end{equation}
This group law is homogeneous with respect to the parabolic dilations, 
\begin{equation}
    t.(x^{0},x^{1},\ldots,x^{2n})=(t^{2}x^{0},tx^{1},\ldots,tx^{2n}) \qquad t>0. 
     \label{eq:RealStructure.Heisenberg-dilations}
\end{equation}

Identifying the Lie algebra $\fh^{2n+1}$ of $\bH^{2n+1}$ with the Lie algebra of left-invariant vector fields, a basis for $\fh^{2n+1}$ is 
provided by the left-invariant vector-fields,
\begin{equation}
    X_{0}=\frac{\partial}{\partial x^{0}}, \quad X_{j}=\frac{\partial}{\partial x^{j}}+x^{n+j}\frac{\partial}{\partial 
    x^{0}}, \quad X_{n+j}=\frac{\partial}{\partial x^{n+j}}-x^{j}\frac{\partial}{\partial 
    x^{0}}, %\quad j=1,\ldots,n,
     \label{eq:Examples.Heisenberg-left-invariant-basis}
\end{equation}
where $j$ ranges from $1$ to $n$.  In particular, for $j,k=1,\ldots,n$ and $k\neq j$ we have the relations
$[X_{j},X_{n+k}]=-2\delta_{jk}X_{0}$ and $[X_{0},X_{j}]=[X_{j},X_{k}]=[X_{n+j},X_{n+k}]=0$. 
Notice also that with respect to the 
dilations~(\ref{eq:RealStructure.Heisenberg-dilations}) 
the vector fields $X_{0}$ is homogeneous of degree $-2$, while $X_{1},\ldots,X_{2n}$ are homogeneous of degree $-1$.

Let $H$ be the subbundle spanned by $X_{1},\ldots,X_{2n}$. We endow $H$ with the complex structure $\cJ$ such that 
$\cJ X_{j}=X_{n+j}$ and $\cJ X_{n+j}=-X_{j}$. 
This defines a homogeneous left-invariant CR structure on $\bH^{2n+1}$. A left-invariant real 
structure on $H$ is then provided by the subbundle $L$ spanned by $X_{1},\ldots,X_{n}$.

% As shown by Folland~\cite{Fo:CHMCRM} the quotients of $\bH^{2n+1}$ by lattice subgroups are in one-to-one correspondance up to CR 
% equivalence with circle bundles over Abelian varietes, i.e., complex tori equipped with a positive line bundle. Therefore, we see the CR structure of any circle 
% bundle over an Abelian variety has an associated real structure. 

More generally, let $G$ be a real nilpotent Lie group which is 
stratified, in the sense that its Lie algebra $\fg:=T_{0}G$ admits a grading by vector subspaces,
\begin{equation}
    \fg=\fh_{1}\oplus \ldots \oplus \fh_{k}, 
%     \label{}
\end{equation}
such that $\fh_{j}:=[\fh_{1},\fh_{j-1}]$, $j=2,\ldots,k$. Set $\fh=\fh_{1}$ and assume that $\dim _{\R}\fh$ is even, say $\dim_{\R}\fh=2n$. 
As a real vector space $\fh$ is isomorphic 
to $\C^{n}$, and so it 
admits a complex structure $\cJ_{0}$. Let $H$ be the subbundle of left-invariant vector fields such that $H_{|x=0}=\fh$ and let $\cJ$ be the almost complex 
structure on $H$ such that $\cJ_{|x=0}=\cJ_{0}$. Then $H$ and $\cJ$ define a  left-invariant almost CR structure on $G$. 

Let $\fl$ be an $n$-dimensional real subspace of $\fh$ such that $\fh=\fl\oplus \cJ_{0}\fl$, and let $L$ denote the subbundle of left-invariant vector 
fields such that $L_{|x=0}=\mathfrak{l}$. Then $H=L\oplus \cJ L$, and so $L$ gives rise to a left-invariant real structure on $H$. 

Let $\Gamma \subset G$ be a lattice, i.e., a discrete cocompact subgroup of $G$. Then \mbox{$M:=\Gamma \backslash G$} is a 
compact nilmanifold. Since $H$ and $\cJ$ are left-invariant, and $\Gamma$ acts on left, they descend to $M$, and hence
define a natural CR structure on $M$. Similarly $L$ descends to a vector bundle on $M$, and thereby gives rise to a real structure on $M$.

\subsubsection{CR Symmetric Spaces} Let $G$ be a connected Lie group with Lie algebra $\fg$ and let $j \in G$ have order $4$. Let $\tau$ be the 
automorphism $\tau(x)=jxj^{-1}$, $x \in G$, and let $G_{\tau}$ denote the fixed point group of $\tau$. Set $s=j^{2}$ and $\sigma=\tau^{2}$, so that 
$\sigma$ is an involutive automorphism of $G$. 

Let $K$ be a compact subgroup of $G_{\tau}$ with Lie algebra $\fk$ and let $\fl$ be an $\op{Ad}(K)$-invariant subspace of 
$ \ker (\op{Ad}(s)+ 1)$ such that $\fl \cap \Ad(j)\fl=\{0\}$. Set $\fh=\fl\oplus \Ad(j)\fl$; this is an  $\op{Ad}(K)$-invariant subspace. 
Let $\fa$ denote the Lie algebra generated by $\fh$ and let us further assume that $\fg=\fk+\fa$. In addition, since $K$ is compact and $\fh$ is $\Ad(K)$-invariant, there is a subspace 
$\fp$ of $\fg$ containing $\fh$ such that $\fg=\fk\oplus \fp$. 

 Under these conditions $M:=G/K$ is a CR symmetric space in the sense of Kaup-Zaitsev (see~\cite[Sect.~6]{KZ:SCRM}).  
  Let $o\in M$ denote the class of $1$. Then $T_{o}M$ is isomorphic to 
$\fp$ and under this identification there are a unique $G$-invariant bundle $H\subset TM$ and a unique $G$-invariant almost complex $\cJ$ structure on 
$H$ such that $H_{o}=\fh$ and $\cJ_{o}=\Ad(j)|_{\fh}$. In addition, the condition $\fg=\fk+\fa$, where $\fa$ is the Lie algebra of $\fh$,  insures us 
that this CR structure is of finite type (\emph{cf}.~\cite[Prop.~6.2]{KZ:SCRM}). 

Let $L$ denote the unique $G$-invariant subbundle of $H$ such that $L_{0}=\fl$, then we 
have $H=L\oplus JL$. Therefore $L$ defines a $G$-invariant real structure on $M$. 

The above construction can be illustrated by the following example. We take $G=SU(n)$,  so that the Lie algebra  $\fg=\mathfrak{su}(n)$ consists of trace-free 
skew-Hermitian matrices. In addition, we let $p$ and $q$ be positive integers so that $n=p+q$. Identifying 
$\C^{n}$ with $\C^{p}\oplus\C^{q}$ we shall write $n\times n$-matrices as block matrices 
$\left( \begin{array}{cc}
    a & b  \\
    c & d
\end{array}\right)$. 
Set $j:=\left( 
\begin{array}{cc}
    1 & 0  \\
     0 & -i
\end{array}\right)$. This is a 4th-order element of $SU(n)$ and we have $s:=j^{2}=\left( 
\begin{array}{cc}
    1 & 0  \\
     0 & -1
\end{array}\right)$. Thus, 
\begin{gather}
    \tau \left[ \left( \begin{array}{cc}
    a & b  \\
    c & d
\end{array}\right) \right]=\Ad(j) \left( \begin{array}{cc}
    a & b  \\
    c & d
\end{array}\right) = \left( \begin{array}{cc}
    a & ib  \\
    -ic & d
\end{array}\right),\\
      \sigma \left[ \left( \begin{array}{cc}
    a & b  \\
    c & d
\end{array}\right) \right]=\Ad(s) \left( \begin{array}{cc}
    a & b  \\
    c & d
\end{array}\right) = \left( \begin{array}{cc}
    a & -b  \\
    -c & d
\end{array}\right) . 
\end{gather}
In particular, we have the splitting $\mathfrak{su}(n)=\fm_{+}\oplus \fm_{-}$, where 
\begin{gather}
   \fm_{+} :=\ker(\Ad(s)-1)=\left\{
         \left( \begin{array}{cc}
    a & 0  \\
    0 & d
\end{array}\right); a\in \mathfrak{u}(p), d\in \mathfrak{u}(q), \Tr a+\Tr d=0 \right\},  \\
   \fm_{-} :=\ker(\Ad(s)+1)=\left\{
         \left( \begin{array}{cc}
    0 & b  \\
    -b^{*} & 0
\end{array}\right) \right\}.
\end{gather}

Set $ K:=SO(p)\times SO(q)=\left\{
         \left( \begin{array}{cc}
    a & 0  \\
    0 & d
\end{array}\right); a\in SO(p), d\in SO(q) \right\}$. This is a (compact) subgroup of $G_{\tau}$. Define $  \fl:=\fm_{-}\cap M_{n}(\R)$, i.e., 
\begin{equation}
    \fl=\left\{
         \left( \begin{array}{cc}
    0 & b  \\
    -b^{t} & 0
\end{array}\right); b\in M_{p,q}(\R) \right\}.
%     \label{}
\end{equation}
Then $\fl$ is an $\Ad(K)$-invariant subspace such that  $\fl \cap \Ad(j)\fl=\{0\}$. In fact, we have $\fh:=\fl \oplus \Ad(j)\fl=\fm_{-}$ and, 
using elementary matrices, it is not difficult to check that 
$[\fm_{-},\fm_{-}]=\fm_{+}$. Therefore $\fg$ agrees with the Lie algebra generated by $\fh$. It then follows that $SU(n)/(SO(p)\times SO(q))$ is a CR 
symmetric space of finite type with a $SU(n)$-invariant real structure defined by $\fl$. 

\subsubsection{Contact manifolds, Legendrian subbundles and geometric quantization} Assume that $(M^{2n+1},H)$ is an orientable contact manifold, 
i.e., $H$ is the annihilator of a 
globally defined contact form $\theta$ on $M$. 
Let $\cJ$ be an almost complex 
 structure on $H$ which is calibrated with respect to $d\theta_{|H}$, i.e., $g_{\theta,\cJ}(X,Y):=d\theta(X,\cJ Y)$ is a positive-definite metric on $H$ 
 (since $M$ is orientable such an almost complex structure always exists).  In particular $(H,\cJ)$ defines an almost CR structure on $M$.
%  Extending $\cJ $ to be zero on $X_{0}$ we can endow $TM$ with the Levi metric $g_{\theta,\cJ }=$ as defined in~(\ref). 
 
 Let $L$ be a Legendrian subbundle of $H$, i.e., $L$ is a maximal isotropic subbundle of 
 $d\theta_{|H}$.  If $X$ and $Y$ are sections  of $L$, then we have
\begin{equation}
    g_{\theta,\cJ}(X,\cJ Y)=d\theta(X,\cJ^{2}Y)=-d\theta(X,Y)=\theta([X,Y])=0.
    \label{eq:Examples.Levi-metric-Legendrian}
\end{equation}
 This implies the orthogonal decomposition $H=L\oplus \cJ L$, and so $L$ defines a real structure independently of the choice of $\cJ$. 
 
 Conversely, suppose that $L$ is a subbundle $H$ of rank $n$ such that $L$ and $\cJ L$ are orthogonal. As 
 in~(\ref{eq:Examples.Levi-metric-Legendrian})  if $X$ and $Y$ are sections  of $L$, then
 \begin{equation}
     d\theta(X,Y)=-\theta([X,Y])= -g_{\theta,\cJ}(X,\cJ Y)=0.
%      \label{¥}
 \end{equation}
 Thus $L$ is a Legendrian subbundle of $H$. 
 
 As we shall now recall examples of contact manifolds equipped with a Legendrian subbundle naturally occur in the context of the geometric quantization of symplectic 
 manifolds. 
 
 Let $(X^{2n},\omega)$ be a 
 symplectic manifold which is \emph{prequantizable}, i.e., the cohomology class of $\frac{1}{2\pi}\omega$ is integral (see, e.g., \cite{AE:QMGPA}, \cite{Wo:GQ}). 
 Then there exists a Hermitian line bundle $(L,h)$ on $X$ with a metric connection $\nabla^{L}$ with curvature $F^{L}=-i\omega$. 
 Let $L^{*}$ denote the dual line bundle with unit sphere bundle 
$S^{*}(L):=\{\xi\in L^{*}; h(\xi,\xi)=1\}$. Recall that the connection 1-form of the dual connection 
 $\nabla^{L^{*}}$  makes sense as a globally defined real 1-form on the total space $M$ of $S(L^{*})$. This can be seen as follows. 
 
 Let $\xi$ be a local section of $S(L^{*})$. Regarding $\xi$ as a non-zero section of $L^{*}$ it defines a local trivialization of $L^{*}$ with 
 respect to which we have $\nabla^{L^{*}}=d+i\alpha$, where  $\alpha:=-ih(\xi,\nabla^{L*}\xi)$. 
 Notice that, as $\nabla^{L^{*}}$ is a metric connection, $\alpha$ must be a real 1-form. Let $\lambda$ denote the 
 local fiber coordinate on $M$ defined by $\xi$ and consider the \emph{real} 1-form defined by
 \begin{equation}
     \theta:=  p^{*}\alpha -i\lambda^{-1}d\lambda,
      \label{eq:Examples.connection-1-form}
 \end{equation}
where $p:M\rightarrow X$ is the fibration of $M$ over $X$. 

Let $\xi'$ be another local section of $S(L^{*})$ and set $\alpha':=-ih(\xi',\nabla^{L*}\xi')$ and $\xi'=\mu \xi$, where $\mu$ is an $S^{1}$-valued function. 
 Then $\alpha'=\alpha-i\mu^{-1}d\mu$. The local fiber coordinate on $M$ defined by $\xi'$ is $\lambda'=(p^{*}\mu)^{-1}\lambda$, 
so the 1-form~(\ref{eq:Examples.connection-1-form}) corresponding to $\xi'$ is 
\begin{multline}
    \theta'=p^{*}\alpha'-i\lambda^{'-1}d\lambda^{'}=p^{*}[\alpha-i \mu^{-1}d\mu]-i [(p^{*}\mu)^{-1}\lambda]^{-1}d[(p^{*}\mu)^{-1}\lambda]\\ = 
    p^{*}\alpha -i\lambda^{-1}d\lambda=\theta.
%     \label{¥}
\end{multline}
This shows that the 1-form $\theta$ in~(\ref{eq:Examples.connection-1-form}) does not depend on the choice of the local section $\xi$, and so it makes sense globally on $M$. 
 
 Let $H=\ker \theta$ and let $V:=\ker dp \subset TM$ be the vertical bundle of $M$. It follows from~(\ref{eq:Examples.connection-1-form}) that  $H\cap V=\{0\}$, and so $dp$ induces an 
 isomorphism from $H$ onto $TX$. Using~(\ref{eq:Examples.connection-1-form}) we also see that locally $d\theta=p^{*}d\alpha$. By assumption we have 
 $id\alpha=F^{L^{*}}=-F^{L}=i\omega$, and so 
 $d\theta=p^{*}\omega$. Since $dp$ induces an 
 isomorphism from $H$ onto $TX$,  it follows that $d\theta$ is non-degenerate on $H$, i.e., $\theta$ is a contact form on $H$. 
 
 Let us further assume that $(X,\omega)$ is \emph{quantizable} in the sense that it admits a Lagrangian subbundle, i.e, a subbundle $\Lambda\subset 
 TX$ which is maximal isotropic for $\omega$ (see, e.g., \cite[Sect.~3.2]{AE:QMGPA}, \cite[Sect.~4.5]{Wo:GQ}). 
 Then the subbundle $L:=p^{*}\Lambda \cap H$ is maximal isotropic for $d\theta$, i.e., $L$ is 
 a Legendrian subbundle of $M$. 
  Therefore, we see that the quantization of a symplectic manifold naturally gives rise to a contact manifold $(M,\theta)$ equipped with a 
  Legendrian subbundle.
 
\section{Real structure and chirality operator}\label{sec:chirality}
Throughout the rest of the paper we let $(M,H)$ be an almost CR manifold of CR dimension $n$. We also assume $H$ to have a real 
structure $L$, that is, 
\begin{equation}
    H=L\oplus \cJ L,
%     \label{}
\end{equation}
where $\cJ$ denotes the almost complex structure of $H$. 

In addition, we endow $L$ with a Riemannian metric $g_{L}$. Extending $g_{L}$ to be zero on $\cJ L\times H$ and $H\times \cJ L$, we endow $H$ with the 
Riemannian metric, 
\begin{equation}
    g_{H}:=g_{L}(.,.)+g_{L}(\cJ.,\cJ.).
%     \label{}
\end{equation}
With respect to this metric $\cJ$ becomes an isometry and the splitting~(\ref{eq:CR.def-real-sructure}) becomes orthogonal. If $(M,H)$ is of finite type, then we see that 
$(H,g_{H})$ defines a sub-Riemannian structure on $M$ compatible with its almost CR structure. 

We fix a choice of supplement $\cN$ of $H$ in $TM$. This allows us to identify $H^{*}$ with the annihilator of $\cN$ in $T^{*}M$. We also set 
$T_{1,0}=\ker(\cJ +i)$ and $T_{0,1}=\ker(\cJ -i)$. Notice that $\overline{T_{1,0}}=T_{0,1}$. Moreover, extending $g_{H}$ into a Hermitian metric on $T_{\C}M$ we 
get the orthogonal decomposition,
\begin{equation}
    H\otimes \C=T_{1,0}\oplus T_{0,1}.
%     \label{}
\end{equation}

For $p,q=0,\ldots,n$ we let $\Lambda^{p,q}:=(\Lambda^{1,0})^{p}\wedge (\Lambda^{0,1})^{q}$ denote the bundle of $(p,q)$-covectors, where 
$\Lambda^{1,0}$  and $\Lambda^{0,1}$ are the respective annihilators 
in $T^{*}_{\C}M$ of the subbundles $T_{0,1}\oplus  (\cN\otimes \C)$ and $T_{1,0}\oplus  (\cN\otimes \C)$. We then have the orthogonal splitting, 
\begin{equation}
    \Lambda^{*}_{\C}H^{*}=\bigoplus_{p,q=0}^{n}\Lambda^{p,q}. 
     \label{eq:CR-Lambda-pq-decomposition}
\end{equation}
% Notice that this decomposition does not depend on the choice of $\theta$, but it does depend on that of $\cN$. 

We shall now turn the bundle of $(p,q)$-covectors into a super-bundle by equipping it with a suitable chirality 
operator. To define this operator we 
shall make use of the real structure of $H$. To this end we
extend the involution~(\ref{eq:CR.involution}) into the antilinear involution on $H\otimes \C$ defined by%, for sections $X$ and $Y$ of $H$, we have 
 \begin{equation}
     \underline{X+iY}=\underline{X}-i \underline{Y} \qquad \forall X,Y\in C^{\infty}(M,H). 
%     \label{eq:}
 \end{equation}
This involution preserves both  $T_{1,0}$ and $T_{0,1}$. Therefore, by duality it gives rise to an antilinear involution of $\Lambda^{*,*}$ 
preserving the bidegree. As we shall see the latter property will be crucial in the construction of the operator $Q_{L}$ in the next section. 

Let $v_{H}(x)$ be the volume form of $g_{H}$ (seen as a section of $\Lambda^{n,n}$), and let $g^{*}_{H}$ denote the Hermitian metric on 
$\Lambda^{*,*}$ induced by $g_{H}$. The operator $\star:\Lambda^{*,*}\rightarrow \Lambda^{n-*,n-*}$ is uniquely determined by the formula,
 \begin{equation}
    \beta\wedge  \underline{\star\alpha}= g^{*}_{H}(\beta,\alpha) v_{H}(x)  \qquad \forall \alpha,\beta \in C^{\infty}(M,\Lambda^{p,q}).
      \label{eq:CR.Hodge-operator}
 \end{equation}

 Let $X_{1},\ldots,X_{n}$ be an orthonormal frame of $L$. Since the splitting $H=L\oplus \cJ L$ is orthogonal we see that $\{X_{j},\cJ X_{j}\}$ is an 
 orthonormal frame of $H$. For $j=1,\ldots,n$ we set $Z_{j}=\frac{1}{\sqrt{2}}(X_{j}-i\cJ X_{j})$ and $Z_{\bar{j}}=\frac{1}{\sqrt{2}}(X_{j}+i\cJ X_{j})$. Then 
 $\{Z_{j}\}$ and $\{Z_{j}, Z_{\bar{j}}\}$ are orthonormal frames of  $T_{1,0}$ and $H\otimes \C$ respectively. Any orthonormal frame of $H\otimes \C$ obtained by 
 a similar process is said to be $\emph{admissible}$. Notice that the invariance of $L$ under the involution~(\ref{eq:CR.involution})
 imply that 
 $\underline{Z_{j}}=Z_{j}$ and $\underline{Z_{\bar{j}}}=Z_{\bar{j}}$.
 
 Let $\{\theta^{j},\theta^{\bar{j}}\}$ be the coframe of $H^{*}\otimes \C$ dual to $\{Z_{j}, Z_{\bar{j}}\}$. For any ordered subsets 
 $J=\{j_{1},\ldots,j_{p}\}$ and $K=\{k_{1},\ldots, k_{q}\}$ of $\{1,\ldots,n\}$ with $j_{1}<\ldots<j_{p}$ and $k_{1}<\ldots<k_{q}$ we set 
$\theta^{J,\bar{K}}:=\theta^{1}\wedge \cdots \wedge \theta^{j_{p}}\wedge \theta^{\bar{k_{1}}}\wedge \cdots \wedge \theta^{\bar{k_{q}}}$.
%  \begin{equation}
%      \theta^{J,\bar{K}}=\theta^{1}\wedge \cdots \wedge \theta^{j_{p}}\wedge \theta^{\bar{k_{1}}}\wedge \cdots \wedge \theta^{\bar{k_{q}}}.
% %      \nonumbe
%  \end{equation}
 Then $\{\theta^{J,\bar{K}}\}$ is an orthonormal coframe of $\Lambda^{*,*}$. 
 
 If  $J=\{j_{1},\ldots,j_{p}\}$ is an ordered subset of $\{1,\ldots,n\}$ with ordered complement $J^{c}=\{j_{1}',\ldots,j_{n-p}'\}$, then we let 
$\varepsilon(J,J^{c})$ denote the signature of the permutation 
\begin{equation}
    (j_{1},\ldots,j_{p},j_{1}',\ldots,j_{n-p}')\longrightarrow (1,\ldots,n).
%     \label
\end{equation}
 Notice that we always have $\varepsilon(J,J^{c})\varepsilon(J^{c},J)=(-1)^{p(n-p)}$, for
 $\varepsilon(J,J^{c})\varepsilon(J^{c},J)$ is the signature of the permutation $(j_{1},\ldots,j_{p},j_{1}',\ldots,j_{n-p}')\rightarrow 
(j_{1}',\ldots,j_{n-p}',j_{1},\ldots,j_{p})$.

%  Furthermore, if we compute the matrix of the operator $*$ in the orthonormal frame $\{\theta^{J,\bar{K}}\}$ then we can prove: 
 
\begin{lemma}\label{lem:CR.Hodge-operator} 
1) We have
\begin{equation}
    \star\theta^{J,\bar{K}}=i^{n}(-1)^{\frac{n(n-1)}{2}+q(n-p)}\varepsilon(J,J^{c})\varepsilon(K,K^{c}) 
    \theta^{J^{c},\bar{K^{c}}} .
     \label{eq:CR.*thetaJK}
\end{equation}

2) On $\Lambda^{p,q}$  we have 
\begin{equation}
    \star^{2}=(-1)^{n+p+q}.
%     \label
\end{equation} 
 \end{lemma}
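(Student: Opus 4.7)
The plan is to compute both formulas directly in the admissible coframe $\{\theta^{J,\bar K}\}$. The first step is to express the volume form in this coframe. Since $\{X_j, \cJ X_j\}$ is an orthonormal real frame of $H$ with dual frame $\{\xi^j, \eta^j\}$, one has $v_H = \xi^1 \wedge \eta^1 \wedge \cdots \wedge \xi^n \wedge \eta^n$, and the change of basis $\theta^j = \frac{1}{\sqrt{2}}(\xi^j + i\eta^j)$, $\theta^{\bar j} = \frac{1}{\sqrt{2}}(\xi^j - i\eta^j)$ yields the simple relation $\theta^j \wedge \theta^{\bar j} = -i\, \xi^j \wedge \eta^j$. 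Taking the product over $j$ and reordering to separate the $(1,0)$ and $(0,1)$ blocks (a riffle-shuffle permutation of signature $(-1)^{n(n-1)/2}$) expresses $v_H$ as an explicit scalar multiple of $\theta^1 \wedge \cdots \wedge \theta^n \wedge \theta^{\bar 1} \wedge \cdots \wedge \theta^{\bar n}$.

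For part (1), since $\{\theta^{J',\bar{K'}}\}$ is an orthonormal coframe, in the defining identity $\beta \wedge \underline{\star \theta^{J,\bar K}} = g^{*}_{H}(\beta, \theta^{J,\bar K})\, v_H$ both sides vanish unless $\beta = \theta^{J,\bar K}$, so $\star \theta^{J,\bar K}$ must be a scalar multiple $c\,\theta^{J^c, \bar{K^c}}$ since this is the only monomial of complementary bidegree whose wedge with $\theta^{J,\bar K}$ is nonzero. Because the involution $\underline{\,\cdot\,}$ acts trivially on each $\theta^j$ and $\theta^{\bar j}$ (by the identities $\underline{Z_j}=Z_j$ and $\underline{Z_{\bar j}}=Z_{\bar j}$ noted in the text), taking $\beta = \theta^{J,\bar K}$ reduces the defining identity to $\bar c\, \theta^{J, \bar K} \wedge \theta^{J^c, \bar{K^c}} = v_H$. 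Reordering $\theta^{J, \bar K} \wedge \theta^{J^c, \bar{K^c}}$ into the canonical order produces the signatures $\varepsilon(J,J^c)\varepsilon(K,K^c)$ together with a sign $(-1)^{q(n-p)}$ arising from moving the $\theta^{\bar K}$ block past the $\theta^{J^c}$ block. Combining with the expression of $v_H$ obtained in the first step and taking complex conjugates yields the formula for $c$.

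For part (2), apply part (1) twice:
\begin{equation*}
    \star^2 \theta^{J,\bar K} = c(p, q, J, K)\, c(n-p, n-q, J^c, K^c)\, \theta^{J, \bar K}.
\end{equation*}
The two $i^n$ factors combine into $i^{2n} = (-1)^n$, the two $(-1)^{n(n-1)/2}$ factors square to $1$, the signature factors satisfy $\varepsilon(J, J^c)\varepsilon(J^c, J) = (-1)^{p(n-p)}$ and analogously for $K$ (as recalled just before the lemma), and the exponents $q(n-p)$ and $(n-q)p$ combine as $n(p+q) - 2pq$. A short calculation modulo $2$ then reduces the total exponent of $-1$ to $n + p + q$, giving $\star^2 = (-1)^{n+p+q}$.

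The main obstacle is purely sign bookkeeping: one must consistently track the orientation convention defining $v_H$, the riffle-shuffle signs from interleaving barred and unbarred indices, the complex conjugation introduced by the antilinear involution $\underline{\,\cdot\,}$, and the subset-permutation signatures. No deeper ingredient is required beyond these careful enumerations.
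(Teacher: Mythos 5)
Your proof is correct and follows essentially the same route as the paper: you express $v_H$ as $i^n(-1)^{n(n-1)/2}\,\theta^1\wedge\cdots\wedge\theta^n\wedge\theta^{\bar 1}\wedge\cdots\wedge\theta^{\bar n}$ via $\theta^j\wedge\theta^{\bar j}=-i\,\xi^j\wedge\eta^j$ and the riffle-shuffle sign, use orthonormality in the defining identity to see that $\star\theta^{J,\bar K}$ is a scalar multiple of $\theta^{J^c,\bar{K^c}}$, extract the reordering sign $(-1)^{q(n-p)}\varepsilon(J,J^c)\varepsilon(K,K^c)$, and then iterate for part 2 using $\varepsilon(J,J^c)\varepsilon(J^c,J)=(-1)^{p(n-p)}$ and the mod-2 reduction of the exponent, exactly as the paper does. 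The one detail to nail down is the complex conjugation of the coefficient introduced by the antilinear involution when solving for the scalar (you flag it, and it cancels in part 2 in any case since the two $i^n$-type factors combine to $(-1)^n$).
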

 \begin{proof}
 Set  $\theta^{n,\bar{n}}:=\theta^{1}\wedge \cdots \wedge \theta^{n}\wedge \theta^{\bar{1}}\wedge \cdots \wedge \theta^{\bar{n}}$. Since $H$ is 
 oriented by means of its almost complex structure, locally we have  
 \begin{equation}
     v_{H}(x)=i^{n} \theta^{1}\wedge \theta^{\bar{1}}\wedge \cdots \wedge \theta^{n}\wedge 
      \theta^{\bar{n}}=i^{n}(-1)^{\frac{n(n-1)}{2}}\theta^{n,\bar{n}}.
%      \label
 \end{equation}
Therefore Eq.~(\ref{eq:CR.Hodge-operator}) can be rewritten as
 \begin{equation}
     \beta\wedge  \underline{\star\alpha}=i^{n}(-1)^{\frac{n(n-1)}{2}}g_{H}^{*}(\beta,\alpha)\theta^{n,\bar{n}}  \qquad \forall 
     \alpha,\beta \in C^{\infty}(M,\Lambda^{p,q}). 
      \label{eq:CR.Hodge-operator2}
 \end{equation}
 
 Let $J_{0}$ and $K_{0}$ be ordered subsets of $\{1,\ldots,n\}$ of respective lengths $p$ and $q$, and set $\omega:=\star\theta^{J_{0},\bar{K_{0}}}=\sum \lambda_{J,\bar{K}} 
 \theta^{J,\bar{K}}$. The $(n,n)$-component of $\underline{\omega}\wedge \Theta^{J,\bar{K}}$ is equal to 
 $\pm\overline{ \lambda_{J,\bar{K^{c}}}}\theta^{n,\bar{n}}$, and so 
 from~(\ref{eq:CR.Hodge-operator2}) we see that $\lambda_{J^{c},\bar{K^{c}}}=0$ unless $J=J_{0}^{c}$ and $K=K_{0}^{c}$, i.e., we have 
 $\omega=\lambda_{J_{0}^{c},\bar{K_{0}^{c}}}\theta^{J_{0}^{c},\bar{K_{0}^{c}}}$. In particular, from~(\ref{eq:CR.Hodge-operator2}) we get
\begin{equation}
 i^{n}(-1)^{\frac{n(n-1)}{2}}\theta^{n,\bar{n}}= \theta^{J_{0},\bar{K_{0}}} \wedge\underline{\omega } = 
\overline{ \lambda_{J_{0}^{c},\bar{K_{0}^{c}}}}\theta^{J_{0},\bar{K_{0}}}\wedge  \theta^{J_{0}^{c},\bar{K_{0}^{c}}}. 
     \label{eq:CR.*thetaJK0}
\end{equation} 

Next, upon writing 
$\theta^{J_{0},\bar{K_{0}}}=\theta^{J_{0},\bar{0}}\wedge \theta^{0,\bar{K_{0}}}$ and 
$\theta^{J_{0}^{c},\bar{K_{0}^{c}}}=\theta^{J_{0}^{c},\bar{0}}\wedge \theta^{0,\bar{K_{0}^{c}}}$ we get
\begin{multline}
 \theta^{J_{0},\bar{K_{0}}} \wedge   \theta^{J_{0}^{c},\bar{K_{0}^{c}}}=
    (-1)^{q(n-p)}\theta^{J_{0},\bar{0}}\wedge\theta^{J_{0}^{c},\bar{0}}\wedge \theta^{0,\bar{K_{0}}} \wedge\theta^{0,\bar{K_{0}^{c}}} \\
    =(-1)^{q(n-p)} \varepsilon(J_{0},J_{0}^{c})\varepsilon(K_{0},K_{0}^{c})\theta^{n,\bar{n}}.
%     \label{¥}
\end{multline}
Comparing this with~(\ref{eq:CR.*thetaJK0}) yields $\lambda_{J_{0}^{c},\bar{K_{0}^{c}}}= 
i^{n}(-1)^{\frac{n(n-1)}{2}+q(n-p)}\varepsilon(J_{0},J_{0}^{c})\varepsilon(K_{0},K_{0}^{c})$. Thus,
\begin{equation}
    \star\theta^{J_{0},\bar{K_{0}}}=i^{n}(-1)^{\frac{n(n-1)}{2}+q(n-p)}\varepsilon(J_{0},J_{0}^{c})\varepsilon(K_{0},K_{0}^{c}) 
    \theta^{J_{0}^{c},\bar{K_{0}^{c}}}.
\end{equation}

Now, using~(\ref{eq:CR.*thetaJK}) we see that $ \star^{2}\theta^{J_{0},\bar{K_{0}}}$ is equal to
\begin{equation}
   (-1)^{n+q(n-p)+(n-q)p}\varepsilon(J_{0}^{c},J_{0})\varepsilon(K_{0}^{c},K_{0}) 
    \varepsilon(J_{0},J_{0}^{c})\varepsilon(K_{0},K_{0}^{c})\theta^{J_{0},\bar{K_{0}}}.
%     \label
\end{equation}
Since $\varepsilon(J_{0}^{c},J_{0})=(-1)^{(n-p)p} \varepsilon(J_{0},J_{0}^{c})$ and 
$\varepsilon(K_{0}^{c},K_{0})=(-1)^{(n-q)q} \varepsilon(K_{0},K_{0}^{c})$ we get 
$\star^{2}\theta^{J_{0},\bar{K_{0}^{c}}}=(-1)^{n+N}\theta^{J_{0},\bar{K_{0}^{c}}}$, with 
\begin{multline}
    N= q(n-p)+(n-q)p+(n-p)p+(n-q)q=(2n-p-q)(p+q) \\
   =(p+q)^{2}=p+q \quad \bmod 2, 
     \label{eq:CR.N}
\end{multline}
that is, $\star^{2}\theta^{J_{0},\bar{K_{0}}}=(-1)^{n+p+q}\theta^{J_{0},\bar{K_{0}}}$. Thus $\star^{2}=(-1)^{n+p+q}$
on $\Lambda^{p,q}$, proving the lemma.
\end{proof}
 
Next, the chirality operator $\gamma : \Lambda^{*,*}\rightarrow \Lambda^{*,*}$ is defined by
  \begin{equation}
  \gamma=i^{n+(p+q)^{2}}\star \qquad \text{on $\Lambda^{p,q}$}.   
      \label{eq:CR.gamma}
 \end{equation}
Notice that $\gamma$ maps $\Lambda^{p,q}$ onto $\Lambda^{n-p,n-q}$.
 
%  The fact that $\gamma$ is a chirality operator is the content of the following. 
\begin{lemma}\label{lem:CR.Z2-grading}
The operator $\gamma$ is a $\Z_{2}$-grading, that is, it satisfies
  \begin{equation}
       \gamma^{2}=1 \qquad \text{and} \qquad \gamma^{*}=\gamma. 
% %      \label
  \end{equation}
\end{lemma}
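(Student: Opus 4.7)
Both claims are fiberwise identities of scalar operators acting diagonally on the orthonormal coframe $\{\theta^{J,\bar K}\}$, so I would check them in this basis using the explicit formula for $\star$ provided by Lemma~\ref{lem:CR.Hodge-operator}. Write
\[
\gamma\theta^{J,\bar K}= c_{J,K}\,\theta^{J^c,\bar{K^c}}, \qquad c_{J,K}:=i^{2n+(p+q)^2}(-1)^{\frac{n(n-1)}{2}+q(n-p)}\varepsilon(J,J^c)\varepsilon(K,K^c),
\]
so that everything reduces to an identity on the scalar $c_{J,K}$.

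\textbf{Step 1 ($\gamma^2=1$).} Applying $\gamma$ a second time on $\Lambda^{n-p,n-q}$ brings in the factor $i^{n+(2n-p-q)^2}$, and the formula from Lemma~\ref{lem:CR.Hodge-operator} gives $\star^2=(-1)^{n+p+q}$. The identity therefore reduces to showing
\[
i^{2n+(p+q)^2+(2n-p-q)^2}=(-1)^{n+p+q}.
\]
Expanding yields $(p+q)^2+(2n-p-q)^2=2(p+q)^2-4n(p+q)+4n^2$, and since $k^2\equiv k\pmod 2$ we have $2k^2\equiv 2k\pmod 4$. Hence the whole exponent is $\equiv 2n+2(p+q)\pmod 4$, which gives precisely $(-1)^{n+p+q}$. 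Multiplying by $\star^2=(-1)^{n+p+q}$ gives $\gamma^2=1$. This is exactly the bookkeeping the factor $i^{n+(p+q)^2}$ is designed to achieve.

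\textbf{Step 2 ($\gamma^*=\gamma$).} Since $\{\theta^{J,\bar K}\}$ is orthonormal and $\gamma$ sends $\theta^{J,\bar K}$ to a multiple of $\theta^{J^c,\bar{K^c}}$, self-adjointness is equivalent to
\[
\overline{c_{J^c,K^c}}=c_{J,K}
\]
for every pair $(J,K)$. Computing $c_{J^c,K^c}$ from the same formula and using the identities $\varepsilon(J^c,J)=(-1)^{p(n-p)}\varepsilon(J,J^c)$ and $\varepsilon(K^c,K)=(-1)^{q(n-q)}\varepsilon(K,K^c)$ recalled before Lemma~\ref{lem:CR.Hodge-operator}, the comparison splits into a comparison of $i$-powers and a comparison of $\pm1$-signs. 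For the $i$-powers, $\overline{c_{J^c,K^c}}/c_{J,K}$ contributes $i^{4n+(p+q)^2+(2n-p-q)^2}\equiv i^{2(p+q)}=(-1)^{p+q}$ by the congruence of Step~1. For the signs, the remaining exponent of $-1$ is
\[
p+q+(n-q)p+p(n-p)+q(n-q)-q(n-p)=2np+p+q-p^2-q^2,
\]
which is even because $p(p-1)$ and $q(q-1)$ are always even. Hence $\overline{c_{J^c,K^c}}=c_{J,K}$, proving $\gamma^*=\gamma$.

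\textbf{Main obstacle.} There is nothing deep here: the entire content of the lemma is the verification that the somewhat unnatural-looking normalization $i^{n+(p+q)^2}$ is chosen exactly so that both $\gamma^2=1$ and $\gamma^*=\gamma$ hold simultaneously in every bidegree. The only care required is the parity/congruence bookkeeping modulo $4$ for the powers of $i$ and modulo $2$ for the signs; the identity $k^2\equiv k\pmod 2$ and the two symmetry relations for $\varepsilon$ do all the work.
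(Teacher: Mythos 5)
Your proof is correct and follows essentially the same route as the paper: both parts reduce to the explicit action of $\star$ (hence $\gamma$) on the orthonormal coframe $\{\theta^{J,\bar K}\}$ from Lemma~\ref{lem:CR.Hodge-operator}, with $\gamma^2=1$ coming from the mod-$4$ bookkeeping of the powers of $i$ and $\gamma^*=\gamma$ from the Hermitian symmetry of the matrix coefficients via $\varepsilon(J^c,J)=(-1)^{p(n-p)}\varepsilon(J,J^c)$. The only cosmetic difference is that you carry the full factor $i^{2n+(p+q)^2}$ where the paper's Eq.~(\ref{eq:CR.gamma-thetaJK}) absorbs (in fact drops) the harmless real constant $i^{2n}=(-1)^n$; this does not affect either verification.
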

\begin{proof}
First, thanks to Lemma~\ref{lem:CR.Hodge-operator}, on $\Lambda^{p,q}$ we have
% see that $\gamma^{2}$ is equal to
\begin{equation}
  \gamma^{2}=    i^{2n+(2n-p-q)^{2}+(p+q)^{2}}\star^{2}=i^{2n+2(p+q)^{2}}(-1)^{n+p+q}=1.
%         \label
\end{equation}
    
Next, let $\{\theta^{J,\bar{K}}\}$ be the coframe of $\Lambda^{*,*}$ associated to an admissible frame $\{Z_{j},Z_{\bar{j}}\}$ of $H\otimes \C$, 
    and let $J$ and $K$ be ordered subsets of respective lengths $p$ and $q$. 
    Then by~(\ref{eq:CR.*thetaJK}) and~(\ref{eq:CR.gamma}) we have 
    \begin{equation}
        \gamma \theta^{J,\bar{K}}=i^{(p+q)^{2}}(-1)^{\frac{n(n-1)}{2}+q(n-p)}\varepsilon(J,J^{c})\varepsilon(K,K^{c})\theta^{J^{c},\bar{K^{c}}}.
         \label{eq:CR.gamma-thetaJK}
    \end{equation}
    Since $\{\theta^{J,\bar{K}}\}$ is an orthonormal coframe, this gives 
    \begin{equation}
           \gamma^{*} \theta^{J^{c},\bar{K^{c}}}=(-i)^{(p+q)^{2}}(-1)^{\frac{n(n-1)}{2}+q(n-p)}\varepsilon(J,J^{c})\varepsilon(K,K^{c})\theta^{J,\bar{K}}.
%         \label
    \end{equation}
    Thus for $\theta^{J,\bar{K}}$ we obtain 
    \begin{multline}
         \gamma^{*} 
         \theta^{J,\bar{K}}=(-i)^{(2n-p-q)^{2}}(-1)^{\frac{n(n-1)}{2}+(n-q)p}\varepsilon(J^{c},J)\varepsilon(K^{c},K)\theta^{J^{c},\bar{K^{c}}},\\
         = (-i)^{(p+q)^{2}}(-1)^{\frac{n(n-1)}{2}+(n-q)p+ p(n-p)+q(n-q)}\varepsilon(J,J^{c})\varepsilon(K,K^{c})\theta^{J^{c},\bar{K^{c}}}.
%         \label
    \end{multline}
    As by~(\ref{eq:CR.N}) we have $(-1)^{(n-q)p+ p(n-p)+q(n-q)}=(-1)^{p+q+q(n-p)}$, we get 
    \begin{equation}
         \gamma^{*} 
         \theta^{J,\bar{K}}=i^{(p+q)^{2}}(-1)^{\frac{n(n-1)}{2}+q(n-p)}\varepsilon(J,J^{c})\varepsilon(K,K^{c})\theta^{J^{c},\bar{K^{c}}} 
         =\gamma \theta^{J,\bar{K}},
%         \label{¥}
    \end{equation}
    that is, $\gamma^{*}=\gamma$. The lemma is thus proved.
\end{proof}
 
\section{The operator $Q_{L}$}\label{sec:QL}
We shall now construct a differential operator $Q_{L}$ acting on the sections of $\Lambda^{*,*}$ 
which is supersymmetric in the sense that it anticommutes with the chirality operator $\gamma$. Furthermore, in suitable bidegree this operator will be hypoelliptic under 
the finite type condition alluded to in Section~\ref{sec.CRARS}. 

First, recall that the tangential $\overline{\partial}$-operator  of 
Kohn-Rossi~(\cite{KR:EHFBCM},~\cite{Ko:BCM}) can be defined as follows. For any $\eta \in C^{\infty}(M, \Lambda^{p,q})$ its differential uniquely 
decomposes as  
\begin{equation}
    d\eta =\dbarb \eta + \partial_{H}\eta + \theta \wedge \cL_{X_{0}}\eta,
     \label{eq:CR.dbarb}
\end{equation}
where $\dbarb \eta $  (resp.~$\partial_{H}\eta$) is a section of $\Lambda^{p,q+1}$ (resp.~$\Lambda^{p+1,q}$). Moreover, when
$T_{1,0}$ is integrable $\overline{\partial}_{H}^{2}$ vanishes on $(0,q)$-forms, and so that we then get a cochain 
 complex $\overline{\partial}_{H}:C^{\infty}(M,\Lambda^{0,*})\rightarrow C^{\infty}(M,\Lambda^{0,*+1})$. 
 
%  More generally, the operator 
% $\dbarb^{2}$ is a tensor on $(p,q)$-forms with $p\geq 1$ and, as the hyperplane $H$ is invariant 
% under the  Reeb flow, this tensor vanishes
% when the complex structure $\cJ$  too is invariant under the Reeb flow, i.e., 
% when we have $[X_{0},JX]=J[X_{0},X]$ for any $X \in C^{\infty}(M,H)$. 

The operator $Q_{L}$ is defined by
\begin{equation}
         Q_{L}=(\bar\partial_{H}^{*}\bar\partial_{H} - 
     \bar\partial_{H}\bar\partial_{H}^{*}) - \gamma (\bar\partial_{H}^{*}\bar\partial_{H} - 
     \bar\partial_{H}\bar\partial_{H}^{*})\gamma.
          \label{eq:QL.definition}
 \end{equation}
% It is immediate from this definition that $Q_{L}$ anticommutes with $\gamma$ .

In order to determine the local expression of $Q_{L}$, let $\{Z_{j},Z_{\bar{j}}\}$ be an admissible orthonormal frame of $H\otimes \C$,  and 
let $\{\theta^{j},\theta^{\bar{j}}\}$ be the associated dual coframe of $H^{*}\otimes \C$. In addition,
we let $\varepsilon(\theta^{\bar{j}})$ denote the exterior multiplication by $\theta^{\bar{j}}$ and 
 let $\iota(\theta^{j})$ denote the interior product by $\theta^{j}$ (i.e., $\iota(\theta^{j})$ is the contraction of forms by the 
 vector field $Z_{\bar{j}}$).

 \begin{lemma}\label{lem:CR.gamma-thetaj-iotaj}
For $j,k=1,\ldots,n$ we have 
\begin{gather}
\gamma  \varepsilon(\theta^{\bar{j}})\gamma=  i\iota(\theta^{j}), \qquad
\gamma \iota(\theta^{j})\gamma= -i\varepsilon(\theta^{\bar{j}}),    
    \label{eq:CR.Clifford-gamma1}  \\
\gamma  \varepsilon(\theta^{\bar{j}})\iota(\theta^{k})\gamma=  \iota(\theta^{j})\varepsilon(\theta^{\bar{k}}), \qquad
\gamma \iota(\theta^{j})\varepsilon(\theta^{\bar{k}}) \gamma=  \varepsilon(\theta^{\bar{j}})\iota(\theta^{k}).
    \label{eq:CR.Clifford-gamma2} 
\end{gather}
\end{lemma}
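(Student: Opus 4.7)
The plan is to verify all four identities by direct computation on basis forms $\theta^{J,\bar K}$, using the explicit formula~(\ref{eq:CR.gamma-thetaJK}) for the action of $\gamma$. First I reduce the four identities to one. By Lemma~\ref{lem:CR.Z2-grading}, $\gamma^{2}=1$, so conjugation by $\gamma$ is an involution: multiplying the first relation in~(\ref{eq:CR.Clifford-gamma1}) on the left and right by $\gamma$ produces the second relation. The two identities in~(\ref{eq:CR.Clifford-gamma2}) then follow from the multiplicativity of conjugation, $\gamma AB\gamma=(\gamma A\gamma)(\gamma B\gamma)$, together with $i\cdot(-i)=1$. Hence it suffices to prove $\gamma\,\varepsilon(\theta^{\bar j})=i\,\iota(\theta^{j})\,\gamma$, which I check on each basis element $\theta^{J,\bar K}$.

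If $j\in K$, both sides vanish at once. Otherwise, let $\ell$ be the number of elements of $K$ strictly less than $j$, let $\ell'$ be the position of $j$ in the ordered tuple $K^{c}$, and set $K':=K\cup\{j\}$. On the left, reordering $\theta^{\bar j}\wedge\theta^{J,\bar K}$ into canonical form yields the sign $(-1)^{p+\ell}$, and then~(\ref{eq:CR.gamma-thetaJK}) contributes the factor $i^{(p+q+1)^{2}}(-1)^{\frac{n(n-1)}{2}+(q+1)(n-p)}\varepsilon(J,J^{c})\varepsilon(K',K'^{c})$. On the right, (\ref{eq:CR.gamma-thetaJK}) first contributes $i^{(p+q)^{2}}(-1)^{\frac{n(n-1)}{2}+q(n-p)}\varepsilon(J,J^{c})\varepsilon(K,K^{c})$, after which the anti-derivation $\iota(\theta^{j})=\iota(Z_{\bar j})$ kills the holomorphic factor $\theta^{J^{c}}$ and extracts $\theta^{\bar j}$ from position $\ell'$ of $\theta^{\overline{K^{c}}}$ with sign $(-1)^{n-p+\ell'-1}$. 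Both sides produce the same form $\theta^{J^{c},\overline{K'^{c}}}$.

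Matching coefficients then reduces the lemma to the combinatorial identity
\[
  i^{(p+q+1)^{2}-(p+q)^{2}-1}\,(-1)^{p+\ell-\ell'+1}\,\frac{\varepsilon(K',K'^{c})}{\varepsilon(K,K^{c})}=1.
\]
The $i$-factor simplifies to $(-1)^{p+q}$, and tracking the move of $j$ from position $q+\ell'$ to position $\ell+1$ in the concatenated permutation $(k_{1},\ldots,k_{q},k_{1}^{c},\ldots,k_{n-q}^{c})$ yields $\varepsilon(K',K'^{c})=(-1)^{q+\ell'-\ell-1}\varepsilon(K,K^{c})$. Substitution then shows that all parities cancel and the identity holds.

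The only real obstacle is the sign bookkeeping---aligning the insertion index $\ell$, the extraction index $\ell'$, and the jump from $(p+q)^{2}$ to $(p+q+1)^{2}$ in the exponent of $i$ so that everything collapses to $1$. Conceptually the lemma is nothing more than a careful application of~(\ref{eq:CR.gamma-thetaJK}).
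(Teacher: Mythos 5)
Your proof is correct and takes essentially the same route as the paper's: reduce all four identities to the single relation $\gamma\,\varepsilon(\theta^{\bar j})\gamma=i\,\iota(\theta^{j})$ using $\gamma^{2}=1$ and the multiplicativity of conjugation, then verify that relation on each basis covector $\theta^{J,\bar K}$ by tracking the signs produced by~(\ref{eq:CR.gamma-thetaJK}). Your sign bookkeeping checks out --- in particular the permutation identity $\varepsilon(K',K'^{c})=(-1)^{q+\ell'-\ell-1}\varepsilon(K,K^{c})$ is right, and with your consistent antiderivation conventions for $\varepsilon$ and $\iota$ the computation lands exactly on the constant $i$ appearing in the statement.
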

\begin{proof}
  First, since $\gamma^{2}=1$ the equalities $\gamma  \varepsilon(\theta^{\bar{j}})\gamma=i\iota(\theta^{j})$ and $\gamma \iota(\theta^{j})\gamma=  
 -i \varepsilon(\theta^{\bar{j}})$ are equivalent to each other. Moreover, we can deduce from them the equalities~(\ref{eq:CR.Clifford-gamma2}). Therefore, we only have 
  to prove that  $\gamma  \varepsilon(\theta^{\bar{j}})\gamma=i\iota(\theta^{j})$.

  If $J=\{j_{1},\ldots,j_{p}\}$ is an ordered subset of $\{1,\ldots,n\}$ and $j$ is an element of $J^{c}$ such that $j_{k}<j<j_{k+1}$ we let 
  $\tilde{\varepsilon}(j,J)=(-1)^{k}$, so that $\tilde{\varepsilon}(j,J)$ is the signature of the permutation 
  $(j,j_{1},\ldots,j_{p})\rightarrow (j_{1},\ldots,j_{k},j,j_{k+1},\ldots,j_{p})$. Then
  \begin{equation}
      \varepsilon(\theta^{\bar{j}})\theta^{J,\bar{K}}=\left\{ 
      \begin{array}{cl}
          \tilde{\varepsilon}(j,K)\theta^{J,\overline{K\cup\{j\}}}& \text{if $J\not\in K$},  \\
          0 & \text{otherwise}.
      \end{array}\right.
       \label{eq:CR.varepsilon-thetaj}
  \end{equation}
Similarly, we have 
  \begin{equation}
      \iota(\theta^{\bar{j}})\theta^{J,\bar{K}}=\left\{ 
      \begin{array}{cl}
          \tilde{\varepsilon}(j,K\setminus\{j\})\theta^{J,\overline{K\setminus\{j\}}}& \text{if $J\in K$},  \\
          0 & \text{otherwise}.
      \end{array}\right.
       \label{eq:CR.iota-thetaj}
 \end{equation}

Let $J$ and $K$ be ordered subsets of $\{1,\ldots,n\}$ of respective lengths $p$ and $q$. Using~(\ref{eq:CR.gamma-thetaJK}) we get
%  we see that $\gamma   \theta^{J,\bar{K}}$ is equal to 
\begin{equation}
 \gamma   \varepsilon(\theta^{\bar{j}})\gamma  \theta^{J,\bar{K}}= i^{(p+q)^{2}}(-1)^{\frac{n(n-1)}{2}+q(n-p)}\varepsilon(J,J^{c})\varepsilon(K,K^{c})\gamma 
 \varepsilon(\theta^{\bar{j}})\theta^{J^{c},\bar{K^{c}}}.
     \label{eq:CR.gamma-varepsilon-thetaj-gamma1}
\end{equation}
In particular, we see that $\gamma  \varepsilon(\theta^{\bar{j}})\gamma \theta^{J,\bar{K}}=0$ if $j\not \in K$. 

Assume now that $j$ is in $K$.  Then using~(\ref{eq:CR.gamma-thetaJK}), (\ref{eq:CR.varepsilon-thetaj}) and~(\ref{eq:CR.gamma-varepsilon-thetaj-gamma1}) 
we get 
% $\gamma  \varepsilon(\theta^{\bar{j}})\gamma \theta^{J,\bar{K}}$ is equal to 
\begin{multline}
 \gamma  
\varepsilon(\theta^{\bar{j}})\gamma \theta^{J,\bar{K}} =     i^{(p+q)^{2}}(-1)^{\frac{n(n-1)}{2}+q(n-p)}\varepsilon(J,J^{c})\varepsilon(K,K^{c})  
\tilde{\varepsilon}(j,K^{c})  \gamma 
     \theta^{J^{c},\overline{K^{c}\cup\{j\}}}\\ 
    =\lambda_{1} \lambda_{2} \lambda_{3}\theta^{J,\overline{K\setminus\{j\}}},
     \label{eq:CR.gamma-varepsilon-thetaj-gamma2}
\end{multline}
where we have let 
\begin{gather}
    \lambda_{1}=i^{(p+q)^{2}+(2n-p-q+1)^{2}}, \qquad
      \lambda_{2}=(-1)^{q(n-p)+(n-q+1)p}\varepsilon(J,J^{c})\varepsilon(J,J^{c}),\\ 
         \lambda_{3}=  \varepsilon(K,K^{c})   \varepsilon(K^{c}\cup\{j\},K\setminus \{j\})  \tilde{\varepsilon}(j,K^{c}).
%     \label
\end{gather}

Recall that given any integer $m$ the difference $m^{2}-m=m(m-1)$ is always an even number. Thus, 
\begin{equation}
    \lambda_{1}=i^{(p+q)^{2}+(p+q-1)^{2}}=i^{2[(p+q)^{2}-(p+q)]+1}=i.
     \label{eq:CR.lambda1}
\end{equation}
Moreover, as $\varepsilon(J,J^{c})\varepsilon(J,J^{c})=(-1)^{p(n-p)}$ we have
\begin{equation}
    \lambda_{2}=(-1)^{q(n-p)+(n-q)p+p+p(n-p)}=(-1)^{nq-2pq+2np+p-p^{2}}=(-1)^{nq}.
     \label{eq:CR.lambda2}
\end{equation}

Next, set $K=\{k_{1},\ldots,k_{q}\}$ and $K^{c}\cup\{j\}=\{k_{1}',\ldots,k_{n-q+1}'\}$. Then we have $j=k_{l}=k_{l'}'$ for some indices $l$ and 
$l'$. By definition $\varepsilon(K,K^{c})$ is the signature of the permutation 
$(k_{1},\ldots,k_{q},k_{1}',\ldots,\hat{k'}_{l'},\ldots,k'_{n-q+1})\rightarrow (1,\ldots,n)$. This permutation can also 
be seen as the composition of the following 
permutations, 
\begin{multline}
  (k_{1},\ldots,k_{q},k_{1}',\ldots,\hat{k'}_{l'},\ldots,k'_{n-q+1})\\ \rightarrow 
  (j, k_{1},\ldots, \hat{k}_{l},\ldots, k_{q},k_{1}',\ldots,\hat{k'}_{l'},\ldots,k'_{n-q+1})   \\
    \rightarrow (k_{1},\ldots, \hat{k}_{l},\ldots, k_{q},j, k_{1}',\ldots,\hat{k'}_{l'},\ldots,k'_{n-q+1})\\
    \rightarrow (k_{1},\ldots, \hat{k}_{l},\ldots, k_{q},k_{1}',\ldots, k'_{n-q+1}) \rightarrow (1,\ldots,n). 
%     \label{¥}
\end{multline}
The respective signatures of these permutations are
\begin{gather}
   \tilde{\varepsilon}(j,K\setminus \{j\}), \qquad (-1)^{q-1}, \qquad \tilde{\varepsilon}(j,K^{c}),   \\
   \varepsilon(K\setminus\{j\},K^{c}\cup\{j\})=(-1)^{(q-1)(n-q+1)}\varepsilon(K^{c}\cup\{j\},K\setminus \{j\}).
%     \label{¥}
\end{gather}
As $(-1)^{q-1}(-1)^{(q-1)(n-q+1)}=(-1)^{(q-1)(n-q)}=(-1)^{nq-q^{2}-n+q}=(-1)^{n+nq}$ we deduce that
$\varepsilon(K,K^{c})=(-1)^{(q-1)(n-q)+}\tilde{\varepsilon}(j,K\setminus 
\{j\})\tilde{\varepsilon}(j,K^{c})\varepsilon(K^{c}\cup\{j\},K\setminus \{j\})$. Thus,
\begin{equation}
    \lambda_{3}=(-1)^{n+nq}\varepsilon(j,K\setminus\{j\}).
     \label{eq:CR.lambda3}
\end{equation}

Now, combining~(\ref{eq:CR.gamma-varepsilon-thetaj-gamma2}) with (\ref{eq:CR.lambda1}), (\ref{eq:CR.lambda2}) and (\ref{eq:CR.lambda3}) 
gives
\begin{equation}
    \gamma  \varepsilon(\theta^{\bar{j}})\gamma 
\theta^{J,\bar{K}}=i(-1)^{n}\tilde{\varepsilon}(j,K\setminus\{j\})\theta^{J,\overline{K\setminus\{j\}}},
\end{equation}
so using~(\ref{eq:CR.iota-thetaj})  we get
\begin{equation}
    \gamma  \varepsilon(\theta^{\bar{j}})\gamma 
\theta^{J,\bar{K}}=i (-1)^{n}\iota(\theta^{\bar{j}})\theta^{J,\bar{K}}.
\end{equation}
Since $\iota(\theta^{\bar{j}})\theta^{J,\bar{K}}=0$ when $j\not \in K$, this shows that $\gamma  \varepsilon(\theta^{\bar{j}})\gamma 
=i(-1)^{n}\iota(\theta^{\bar{j}})$, completing the proof. 
\end{proof}

In the sequel we let $\op{O}_{H}(0)$ denote a general zeroth order differential operator and we let $\op{O}_{H}(1)$ denote a first order 
differential operator involving only differentiations along $H\otimes \C$ . For instance, seen as differential operators, $Z_{j}$ and $Z_{\bar{j}}$ both are 
$\op{O}_{H}(1)$, but $X_{0}$ is not. Bearing this in mind the following holds.

% denote a general operator (differential operator or \psivdo) of Heisenberg order~$\leq m$. Then we have: 
\begin{proposition}\label{prop:QL.local-form}
 In the local trivialization of $\Lambda^{*,*}$ defined by the orthonormal coframe $\{\theta^{J,\bar{K}}\}$ we have 
     \begin{equation}
         Q_{L}=\sum_{j,k=1}^{n} \left( \varepsilon(\theta^{\bar{j}})\iota(\theta^{k})-\iota(\theta^{j})\varepsilon(\theta^{\bar{k}})\right)  
     ( Z_{\bar{j}}Z_{k}+Z_{j}Z_{\bar{k}}) +           
         \op{O}_{H}(1).
         \label{eq:QL.local-formula}
     \end{equation}
In particular, in the local trivialization of $\Lambda^{*,0}\oplus \Lambda^{*,n}$ we have 
\begin{equation}
      Q_{L}=\pm \sum_{j=1}^n (Z_{j}Z_{\bar{j}}+Z_{\bar{j}}Z_{j})+  \op{O}_{H}(1),
     \label{eq:CR.Qb-local.*0*n}
\end{equation}
where the sign $\pm $ is $-$ on $\Lambda^{*,0}$ and $+$ on $\Lambda^{*,n}$.  
\end{proposition}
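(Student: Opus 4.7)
The plan is to reduce (\ref{eq:QL.local-formula}) to a principal-symbol computation in the admissible coframe, use Lemma~\ref{lem:CR.gamma-thetaj-iotaj} to convert the conjugation by $\gamma$ into a swap of $\varepsilon$--$\iota$ factors, and then collapse the resulting sum in bidegrees $(p,0)$ and $(p,n)$ via the fact that $\iota(\theta^k)$, respectively $\varepsilon(\theta^{\bar j})$, annihilates forms of those types.

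First I would establish the local expressions
\begin{equation*}
\dbarb = \sum_{j=1}^{n} \varepsilon(\theta^{\bar j}) Z_{\bar j} + \op{O}_H(0), \qquad \dbarb^{*} = -\sum_{j=1}^{n} \iota(\theta^j) Z_j + \op{O}_H(0).
\end{equation*}
The first one follows directly from the defining relation (\ref{eq:CR.dbarb}) applied to a section $\eta = f\, \theta^{J,\bar K}$: expanding $df$ in the coframe dual to $\{Z_j, Z_{\bar j}\}$, the projection onto $\Lambda^{p,q+1}$ is exactly $\varepsilon(\theta^{\bar j}) Z_{\bar j} f \cdot \theta^{J,\bar K}$ plus the contribution $f\,\dbarb(\theta^{J,\bar K})$, which is an $\op{O}_H(0)$ endomorphism, while the $\theta\wedge\cL_{X_{0}}$ piece is thrown away. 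For the adjoint I would combine $\varepsilon(\theta^{\bar j})^{*} = \iota(\theta^j)$ with $Z_{\bar j}^{*} = -Z_j$ modulo $\op{O}_H(0)$; the latter comes from integration by parts together with $\overline{Z_{\bar j}} = Z_j$ and the orthonormality of the admissible frame.

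Composing these and freely commuting the endomorphism factors past the scalar derivations (each such commutator is $\op{O}_H(0)$ and hence contributes $\op{O}_H(1)$ after multiplication by one more $Z$), I would obtain
\begin{equation*}
\dbarb^{*}\dbarb - \dbarb\dbarb^{*} = \sum_{j,k}\bigl(\varepsilon(\theta^{\bar j})\iota(\theta^k)\, Z_{\bar j}Z_k - \iota(\theta^j)\varepsilon(\theta^{\bar k})\, Z_j Z_{\bar k}\bigr) + \op{O}_H(1).
\end{equation*}
To evaluate $\gamma(\dbarb^{*}\dbarb - \dbarb\dbarb^{*})\gamma$ I would invoke Lemma~\ref{lem:CR.gamma-thetaj-iotaj}: the $\pm i$ factors cancel in the products $\gamma\varepsilon(\theta^{\bar j})\iota(\theta^k)\gamma$ and $\gamma\iota(\theta^j)\varepsilon(\theta^{\bar k})\gamma$, so conjugation by $\gamma$ simply swaps these two endomorphisms. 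Since $\gamma$ has constant scalar coefficients in the trivialization $\{\theta^{J,\bar K}\}$, it commutes with $Z_j$ and $Z_{\bar k}$ modulo $\op{O}_H(0)$, so $\gamma Z_{\bar j} Z_k \gamma = Z_{\bar j} Z_k + \op{O}_H(1)$ and similarly for the other derivative pair. Subtracting the conjugated expression from the previous one and regrouping the sums yields precisely (\ref{eq:QL.local-formula}).

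Finally, to derive (\ref{eq:CR.Qb-local.*0*n}) I would specialize (\ref{eq:QL.local-formula}) to $\Lambda^{*,0}$ and $\Lambda^{*,n}$. On $\Lambda^{*,0}$ every $\iota(\theta^k)$ vanishes, so the first type of endomorphism drops out, and the Clifford relation $\{\iota(\theta^j),\varepsilon(\theta^{\bar k})\} = \delta_{jk}$ reduces $\iota(\theta^j)\varepsilon(\theta^{\bar k})$ to multiplication by $\delta_{jk}$; the sum thus collapses to $-\sum_{j}(Z_{\bar j}Z_j + Z_j Z_{\bar j})$ modulo $\op{O}_H(1)$. On $\Lambda^{*,n}$ the roles of $\iota$ and $\varepsilon$ are exchanged and the sign flips, giving the $+$ version. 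The main obstacle I anticipate is the scrupulous bookkeeping of error terms: I must verify that every correction swept into $\op{O}_H(0)$ or $\op{O}_H(1)$ really involves only differentiation along $H\otimes\C$ and never a transverse $X_{0}$-derivative. This ultimately reduces to checking that the Christoffel-type contributions from differentiating the admissible coframe stay horizontal and that the $\theta\wedge\cL_{X_{0}}$ component discarded in (\ref{eq:CR.dbarb}) does not reappear upon taking the adjoint.
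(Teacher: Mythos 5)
Your proposal is correct and follows essentially the same route as the paper's proof: express $\dbarb=\sum\varepsilon(\theta^{\bar j})Z_{\bar j}$ and $\dbarb^{*}=-\sum\iota(\theta^{j})Z_{j}$ modulo $\op{O}_{H}(0)$, compute $Q_{L}'=\dbarb^{*}\dbarb-\dbarb\dbarb^{*}$ modulo $\op{O}_{H}(1)$, conjugate by $\gamma$ using Lemma~\ref{lem:CR.gamma-thetaj-iotaj} to swap the $\varepsilon$--$\iota$ factors, subtract, and then collapse the sum on $\Lambda^{*,0}$ and $\Lambda^{*,n}$ via $\iota(\theta^{j})\varepsilon(\theta^{\bar k})=\delta^{\bar jk}$ (resp.\ its mirror). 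The only difference is that you justify the local expressions of $\dbarb$ and $\dbarb^{*}$ from scratch where the paper cites Beals--Greiner.
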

\begin{proof}
    Set $Q_{L}':=\bar\partial_{H}^{*}\bar\partial_{H} - 
     \bar\partial_{H}\bar\partial_{H}^{*}$. One can check (see, e.g, \cite{BG:CHM}) that  in the trivialization of $\Lambda^{*,*}$ defined by 
     $\{\theta^{J,\bar{K}}\}$ we have 
\begin{equation}
         \dbarb =\sum_{j=1}^n \varepsilon(\theta^{\bar{j}})Z_{\bar{j}}\qquad  \text{and} \qquad \dbarb^{*}=-\sum_{j=1}^n  \iota(\theta^{j})Z_{j}+\op{O}_{H}(0).
%     \label
\end{equation}
 Therefore, 
 \begin{multline}
        Q_{L}'=-\sum_{j,k=1}^{n} \iota(\theta^{j})\varepsilon(\theta^{\bar{k}})Z_{j}Z_{\bar{k}}  + 
        \sum_{j,k=1}^{n} \varepsilon( \theta^{\bar{j}})\iota(\theta^{k})Z_{\bar{j}}Z_{k}+\op{O}_{H}(1)\\
     =\sum_{j,k=1}^{n} \left( \varepsilon(\theta^{\bar{j}})\iota(\theta^{k})Z_{\bar{j}}Z_{k}- \iota(\theta^{j})\varepsilon(\theta^{\bar{k}})Z_{j}Z_{\bar{k}}\right) +\op{O}_{H}(1).
      \label{eq:QL.QL'}
 \end{multline}

 Using Lemma~\ref{lem:CR.gamma-thetaj-iotaj} we get
\begin{multline}
   \gamma Q_{L}'\gamma=   \sum_{j,k=1}^{n} \left( \gamma\varepsilon(\theta^{\bar{j}})\iota(\theta^{k})\gamma Z_{\bar{j}}Z_{k}- \gamma 
   \iota(\theta^{j})\varepsilon(\theta^{\bar{k}})\gamma Z_{j}Z_{\bar{k}}\right) +\op{O}_{H}(1) \\
  =  \sum_{j,k=1}^{n} \left( \iota(\theta^{j})\varepsilon(\theta^{\bar{k}}) Z_{\bar{j}}Z_{k}- \varepsilon(\theta^{\bar{j}})\iota(\theta^{k}) Z_{j}Z_{\bar{k}}\right) +\op{O}_{H}(1).
%     \label{¥}
\end{multline}
Combining this with~(\ref{eq:QL.QL'}) then shows that modulo $\op{O}_{H}(1)$-terms we have
 \begin{multline*}
     Q_{L}=Q_{L}'-\gamma Q_{L}'\gamma \\ = 
   \sum_{j,k=1}^{n} \left( \varepsilon(\theta^{\bar{j}})\iota(\theta^{k})Z_{\bar{j}}Z_{k}- 
   \iota(\theta^{j})\varepsilon(\theta^{\bar{k}})Z_{j}Z_{\bar{k}}
     -   \iota(\theta^{j})\varepsilon(\theta^{\bar{k}}) Z_{\bar{j}}Z_{k}+ \varepsilon(\theta^{\bar{j}})\iota(\theta^{k}) Z_{j}Z_{\bar{k}}\right) \\
     =  \sum_{j,k=1}^{n} \left( \varepsilon(\theta^{\bar{j}})\iota(\theta^{k})-\iota(\theta^{j})\varepsilon(\theta^{\bar{k}})\right)  
     ( Z_{\bar{j}}Z_{k}+Z_{j}Z_{\bar{k}}).
%      \label{¥}
 \end{multline*}
 
Now, on $\Lambda^{p,0}$ we have $\varepsilon(\theta^{\bar{j}})\iota(\theta^{k})=0$ and 
$\iota(\theta^{j})\varepsilon(\theta^{\bar{k}})=\delta^{\bar{j}k}$. Therefore, on $(p,0)$-forms we have 
\begin{equation}
    Q_{L}=\sum_{j,k=1}^{n} (- \delta^{\bar{j}k})\left( Z_{\bar{j}}Z_{k}+Z_{j}Z_{\bar{k}}\right) +\op{O}_{H}(1)
    =-\sum_{j=1}^n( Z_{\bar{j}}Z_{j}+Z_{j}Z_{\bar{j}})+\op{O}_{H}(1).
%     \label{¥}
\end{equation}
 Similarly, as on $\Lambda^{p,n}$ we have $\varepsilon(\theta^{\bar{j}})\iota(\theta^{k})=\delta^{\bar{j}k}$ and 
$\iota(\theta^{j})\varepsilon(\theta^{\bar{k}})=0$, we see that on $(p,n)$-forms $Q_{L}=\sum_{j=1}^n ( 
Z_{\bar{j}}Z_{j}+Z_{j}Z_{\bar{j}})+\op{O}_{H}(1)$. The proof is complete.
\end{proof}

\section{Hypoelliptic Properties of $Q_{L}$}\label{sec:hypoelliticity}
From now on we assume that $M$ is compact. This assumption is not essential, but it will simplify the exposition of what follows. In fact,
all the following results can be localized and, as such, they continue to hold  in the non-compact case. 

As $\{Z_{j}\}$ is an admissible frame, we can write $Z_{j}=\frac{1}{\sqrt{2}}(X_{j}-iX_{n+j})$, where $\{X_{j}\}$ is a local 
orthonormal frame of $L$ and $X_{n+j}:=\cJ X_{j}$. Then  using~(\ref{eq:QL.local-formula}) we can check that on $(p,0)$-forms and $(p,n)$-forms
we have
\begin{equation}
    Q_{L}=\pm (X_{1}^{2}+\ldots+X_{2n}^{2}) + \op{O}_{H}(1). 
%     \label{¥}
\end{equation}
This means that, up to sign factor and to an $\op{O}_{H}(1)$-term, on these forms $Q_{L}$ is a sum of squares.  A well-known result of
H\"ormander~\cite{Ho:HSODE} then insures us that when $(M,H)$ has finite type such an operator is hypoelliptic with gain of $2/r$ derivatives, 
where $r$ denotes the minimal number of Lie brackets of vector fields with values of $H$ that are needed to span $TM$. In other words, for all $s \in 
\R$, we have %on $\Lambda^{*,0}\oplus \Lambda^{*,n}$ we have 
\begin{equation}
    Q_{L}u\in L^{2}_{s} \Longrightarrow u \in L^{2}_{s+\frac{2}{r}}.
%     \label{¥}
\end{equation}

Given vector fields $X_{1},\ldots,X_{m}$ spanning $H$ at every point, Folland-Stein~\cite{FS:EDdbarbCAHG} and Rothschild-Stein~\cite{RS:HDONG} introduced
suitable functional spaces to study sums of squares. Namely, for $k=0,1,2,\ldots$ they defined
% \begin{equation}
%     S_{k,\text{loc}}^{2}(M):=\bigcap_{1\leq l\leq k}\{u\in L^{2}_{\text{loc}}; Y_{1}\ldots Y_{l}u\in L^{2}_{\text{loc}} \ \forall Y_{j} \in 
%     C^{\infty}(M,H)\}, \quad k\in \N.
% %     \label{¥}
% \end{equation}
% When $M$ is compact and $X_{1},\ldots,X_{m}$  are vector fields spanning $H$ at every point, $S_{k,\text{loc}}^{2}(M)$ agrees with
\begin{equation}
     S_{k}^{2}(M):=\bigcap_{1\leq l\leq k}\{u\in L^{2}; X_{i_{1}}\ldots X_{i_{l}}u\in L^{2}\ \forall i_{j}\in\{1,\ldots,m\}\}, 
     \label{eq:hypoellipticity.Sk2a}
\end{equation}
which is a Hilbert space when endowed with the Hilbert norm, 
\begin{equation}
    \|u\|_{S_{k}^{2}}:=\left( \sum_{1\leq l\leq k}\sum_{i_{1},\ldots,i_{l}}\|X_{i_{1}}\ldots 
    X_{i_{l}}u\|^{2}\right)^{\frac{1}{2}}, \qquad u\in S^{2}_{k}(M). 
     \label{eq:hypoellipticity.Sk2b}
\end{equation}
These definitions also makes sense for sections of any vector bundle over $M$.

If $P$ is a differential operator of order $m$ on $M$, we say that $P$ is \emph{maximal hypoelliptic} if, for all $k \in \N$, 
\begin{equation}
    Pu \in S_{k}^{2} \Longrightarrow u \in S_{k+m}^{2}.
%     \label{¥}
\end{equation}

Rothschild-Stein~\cite{RS:HDONG} proved that if $(M,H)$ has finite type, then a sum of squares is maximal hypoelliptic and we have a continuous inclusion
$S^{2}_{k}\subset L^{2}_{\frac{k}{r}}$. Incidentally, maximal hypoellipticity implies hypoellipticity with gain of $\frac{2}{r}$-derivatives. 

In fact, Rothschild-Stein~\cite{RS:HDONG} and Rothschild-Tartakoff~\cite{RT:PCEBOHT} even constructed
parametrices for sum of squares in a suitable class of singular-integral operators.  These operators enjoy various regularity properties, including mapping continuously 
$S_{k}^{2}$ to $S_{k+2}^{2}$. We refer to~\cite{RS:HDONG} for a thorough account on these properties.  

Summarizing all this we obtain

\begin{proposition}\label{prop:QL-hypoellipticity}
Assume that $(M,H)$ is of finite type. Then on $(p,0)$-forms and $(p,n)$-forms $Q_{L}$ is  maximal hypoelliptic and 
admits a parametrix in the class of singular-integral operators of Rothschild-Stein. 
\end{proposition}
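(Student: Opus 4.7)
The plan is to reduce the proposition to the real sum-of-squares setting of Hörmander and Rothschild--Stein. The bulk of the work has already been carried out in Proposition~\ref{prop:QL.local-form}, whose local formula~(\ref{eq:CR.Qb-local.*0*n}) identifies $Q_L$ on $(p,0)$-forms and $(p,n)$-forms with $\pm\sum_{j=1}^{n}(Z_{j}Z_{\bar j}+Z_{\bar j}Z_{j})$ modulo a first order $\op{O}_{H}(1)$-term.

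First I would pass from the complex admissible frame $\{Z_{j},Z_{\bar j}\}$ to the underlying real orthonormal frame $\{X_{j},X_{n+j}:=\cJ X_{j}\}$ of $H$. Writing $Z_{j}=\tfrac{1}{\sqrt{2}}(X_{j}-iX_{n+j})$ and $Z_{\bar j}=\tfrac{1}{\sqrt{2}}(X_{j}+iX_{n+j})$, a direct computation in which the imaginary cross terms cancel out of the symmetric combination gives
\begin{equation*}
   Z_{j}Z_{\bar j}+Z_{\bar j}Z_{j}=X_{j}^{2}+X_{n+j}^{2}.
\end{equation*}
Substituting into~(\ref{eq:CR.Qb-local.*0*n}) I obtain, on $(p,0)$-forms and $(p,n)$-forms,
\begin{equation*}
   Q_{L}=\pm\sum_{j=1}^{2n}X_{j}^{2}+\op{O}_{H}(1),
\end{equation*}
with sign $-$ on $(p,0)$-forms and $+$ on $(p,n)$-forms. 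In the local trivialization of $\Lambda^{p,0}$ (respectively $\Lambda^{p,n}$) provided by the frame $\{\theta^{J,\bar K}\}$, the sum-of-squares principal part is scalar (i.e.\ diagonal), which puts $Q_{L}$ on exactly the same footing as the scalar operators treated in~\cite{Ho:HSODE} and~\cite{RS:HDONG}.

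Next I would invoke the finite type hypothesis on $(M,H)$: iterated Lie brackets of sections of $H$ of length at most some integer $r$ span $TM$ pointwise. Since the real frame $\{X_{1},\ldots,X_{2n}\}$ frames $H$ locally, Hörmander's bracket condition is satisfied, and the classical theorem of~\cite{Ho:HSODE} applied to $\pm\sum X_{j}^{2}$ yields hypoellipticity with gain of $2/r$ derivatives in the Sobolev scale. The sign is immaterial, and the first order $\op{O}_{H}(1)$-remainder is subordinate to the principal part and so cannot destroy hypoellipticity.

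Finally, to upgrade this to maximal hypoellipticity in the $S^{2}_{k}$-scale and to obtain a parametrix in the singular-integral class of Rothschild--Stein, I would quote directly the main results of~\cite{RS:HDONG} (sharpened in~\cite{RT:PCEBOHT}). These results apply verbatim to $\pm\sum X_{j}^{2}$ under the finite type assumption and their conclusions extend without change to $\pm\sum X_{j}^{2}+\op{O}_{H}(1)$ by the standard device of absorbing lower order horizontal terms via the parametrix of the principal part; the bundle-valued setting poses no problem because the principal part is diagonal in the chosen trivialization, so the analysis goes through component by component. The only point that requires any attention is checking that the $\op{O}_{H}(1)$-remainder, which may couple the components of a form, is genuinely subordinate in the Rothschild--Stein sense; this is immediate from its construction as a first order operator whose coefficients are smooth and bounded and which involves only horizontal differentiations. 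There is thus no substantive obstacle: the work has all been done in Proposition~\ref{prop:QL.local-form} and everything else is an appeal to standard hypoellipticity theory.
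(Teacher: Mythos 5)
Your proposal is correct and follows essentially the same route as the paper: rewrite the local formula~(\ref{eq:CR.Qb-local.*0*n}) in terms of the real frame $\{X_j,\cJ X_j\}$ to exhibit $Q_L$ on $(p,0)$- and $(p,n)$-forms as $\pm\sum_{j=1}^{2n}X_j^2+\op{O}_{H}(1)$, then invoke H\"ormander's theorem and the Rothschild--Stein (and Rothschild--Tartakoff) results under the finite type hypothesis. The identity $Z_jZ_{\bar j}+Z_{\bar j}Z_j=X_j^2+X_{n+j}^2$ and the remarks on absorbing the lower order horizontal remainder are exactly the content of the paper's argument.
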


Suppose now that $(M,H)$ is of hypersurface type, i.e., $\codim H=1$. The Levi form of $(M,H)$ is then 
defined as the Hermitian form,
\begin{equation}
    \cL:T_{1,0}\times T_{1,0}\longrightarrow T_{\C}M/(H\otimes \C)
%     \label{¥}
\end{equation}
such that, for all sections $Z$ and $W$ of $T_{1,0}$ and for all $x\in M$, we have
\begin{equation}
    \cL_{x}(Z(x),{W}(x))=[Z,\overline{W}](x) \quad \bmod H_{x}\otimes \C.
%     \label{¥}
\end{equation}
It is not difficult to check that $(M,H)$ is of finite type if and only if $\cL$ does not vanish anywhere. 

On the other hand, when $\codim H=1$ Beals-Greiner~\cite{BG:CHM} (see also~\cite{Ta:NCMA}) constructed a pseudodifferential calculus, the so-called 
Heisenberg calculus, containing a full symbolic calculus allowing us to \emph{explicitly} construct parametrices for sums of squares, as well as for the 
Kohn Laplacian under the so-called condition $Y(q)$ (see~\cite{BG:CHM}). Therefore, we obtain

\begin{proposition}\label{prop:QL.hypoellipticity-codim1}
 1)  If $\codim H=1$ and $\cL$ is non-vanishing, then on $(p,0)$-forms and $(p,n)$-forms  $Q_{L}$ admits a parametrix in the
  Heisenberg calculus.\smallskip
  
  2) If $\dim M=3$ and  $\cL$ is non-vanishing, then in every bidegree $Q_{L}$ is hypoelliptic and admits a parametrix in the
  Heisenberg calculus.
 \end{proposition}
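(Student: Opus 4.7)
The plan is to invoke the Heisenberg pseudodifferential calculus and reduce the existence of a parametrix to the invertibility of the Heisenberg principal symbol. The key input is the local expression~(\ref{eq:CR.Qb-local.*0*n}): on $(p,0)$-forms and $(p,n)$-forms,
\begin{equation*}
    Q_{L}=\pm\sum_{j=1}^n (Z_{j}Z_{\bar{j}}+Z_{\bar{j}}Z_{j})\otimes \op{id} + \op{O}_{H}(1).
\end{equation*}
Since each $Z_{j}$ and $Z_{\bar{j}}$ is a horizontal vector field, it has Heisenberg order $1$, so $Q_{L}$ is a Heisenberg differential operator of Heisenberg order $2$, while the $\op{O}_{H}(1)$-remainder has Heisenberg order at most $1$. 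In particular the principal Heisenberg symbol of $Q_{L}$ on these forms is (up to sign) that of the horizontal sub-Laplacian $\Delta_{b}:=\sum_{j}(Z_{j}Z_{\bar{j}}+Z_{\bar{j}}Z_{j})$, tensored with the identity of $\Lambda^{p,0}$ or $\Lambda^{p,n}$.

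Next I would verify the Rockland condition pointwise. At each $x\in M$ the model operator associated to $\Delta_{b}$ on the osculating Heisenberg group $G_{x}M$ is the standard sub-Laplacian built from the Levi form $\cL_{x}$. A classical computation of Folland-Stein shows that under the Bargmann-Fock representations of $G_{x}M$, this model operator becomes a (possibly indefinite) sum of rescaled harmonic oscillators whose spectrum avoids zero as soon as $\cL_{x}$ has no zero eigenvalue; equivalently, the Rockland condition holds at $x$ iff $\cL_{x}$ is non-degenerate. Because $\cL$ is assumed to be non-vanishing everywhere, the principal Heisenberg symbol of $Q_{L}$ is pointwise invertible on $\Lambda^{p,0}$ and $\Lambda^{p,n}$. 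Appealing to the parametrix construction of Beals-Greiner~\cite{BG:CHM} (see also Taylor~\cite{Ta:NCMA}), one then inverts the principal symbol in the Heisenberg symbol algebra and lifts to a genuine parametrix in $\Psi_{H}^{-2}$, establishing part~1.

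Part~2 should then follow from part~1 by a dimension count: when $\dim M=3$ we have $\codim H=1$ and CR dimension $n=1$, so the only possible bidegrees are $(0,0)$, $(1,0)$, $(0,1)$, $(1,1)$; every one of them is of the form $(p,0)$ or $(p,n)$. Thus the Heisenberg parametrix from part~1 covers every bidegree, and the hypoellipticity statement is a direct consequence, either from Proposition~\ref{prop:QL-hypoellipticity} or from the standard fact that the existence of a Heisenberg parametrix of order $-2$ implies hypoellipticity with a gain of one classical derivative.

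The main technical obstacle I anticipate is the Rockland condition step when $\cL$ is indefinite (which is allowed since we only demand $\cL\neq 0$, not positivity). Whereas for a strictly pseudoconvex $\cL$ the model operator is essentially a positive harmonic oscillator, in the signature-$(r,s)$ case one gets a difference of two harmonic oscillators, and one must check that no irreducible unitary representation produces an operator with $0$ in its spectrum; the absence of zero eigenvalues for the Levi form is precisely what avoids this. A secondary, more bookkeeping, difficulty is checking that the $\op{O}_{H}(1)$-remainder in~(\ref{eq:CR.Qb-local.*0*n}) genuinely lies in the Heisenberg class $\Psi_{H}^{1}$ and contributes only to lower-order symbols, so that it does not interfere with the symbolic inversion.
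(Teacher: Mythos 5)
Your overall strategy --- identify the Heisenberg principal symbol of $Q_{L}$ on $\Lambda^{p,0}\oplus\Lambda^{p,n}$ with that of the horizontal sub-Laplacian and invert it in the Beals--Greiner calculus --- is exactly the route the paper takes (the paper simply records that $Q_{L}=\pm(X_{1}^{2}+\cdots+X_{2n}^{2})+\op{O}_{H}(1)$ on those forms and cites the fact that Beals--Greiner construct parametrices for sums of squares). However, your Rockland-condition step contains a genuine error: you assert that the model operator's spectrum avoids zero ``as soon as $\cL_{x}$ has no zero eigenvalue'', and hence that the Rockland condition holds iff $\cL_{x}$ is \emph{non-degenerate}. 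The correct criterion is that it holds iff $\cL_{x}$ is \emph{non-vanishing}, which is the (strictly weaker) hypothesis of the proposition. The point is that $\sum_{j}(Z_{j}Z_{\bar{j}}+Z_{\bar{j}}Z_{j})=\sum_{j}\bigl(X_{j}^{2}+(\cJ X_{j})^{2}\bigr)$ is a sign-definite sum of squares irrespective of the signature of $\cL_{x}$; it is not an indefinite combination of harmonic oscillators. Under a nontrivial Bargmann representation $\pi_{\lambda}$ of the osculating group at $x$ it becomes a harmonic oscillator with frequencies $|\lambda|\,|\mu_{j}|$ in the directions where the Levi eigenvalue $\mu_{j}$ is nonzero, plus a free (nonnegative) Laplacian in the null directions, so its spectrum is bounded below by $|\lambda|\sum_{j}|\mu_{j}|>0$ as soon as $\cL_{x}\neq 0$, even when some $\mu_{j}$ vanish. (The signature of the Levi form only enters for operators carrying a transverse term $i\alpha X_{0}$, such as the Kohn Laplacian; that is where the $Y(q)$ condition comes from.)

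As written, then, your argument establishes the proposition only under the extra assumption that $\cL$ is everywhere non-degenerate, and so misses precisely the weakly pseudoconvex, degenerate-Levi-form case that the statement (and the surrounding discussion in the paper) is designed to include. Once the Rockland criterion is corrected to ``$\cL_{x}\neq 0$ for every $x$'', the rest of your argument is sound: the $\op{O}_{H}(1)$ remainder is a polynomial of degree at most one in horizontal vector fields, hence of Heisenberg order at most one and harmless for the symbolic inversion, and your dimension count for part 2 (when $\dim M=3$ one has $n=1$, so every bidegree is of the form $(p,0)$ or $(p,n)$) is exactly the paper's.
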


The hypoellipticity properties of $Q_{L}$ show a new phenomenon with respect to what happens for the Kohn Laplacian, i.e., the Laplacian of the 
$\dbarb$-complex,  
\begin{equation}
    \Box_{H}:=\dbarb^{*}\dbarb + \dbarb \dbarb^{*}.
%     \label{}
\end{equation}
For CR manifolds of hypersurface type the invertibility in the Heisenberg calculus' sense of the principal symbol of $\Box_{H}$ on $(p,q)$-forms  
is equivalent to the $Y(q)$-condition of Kohn~\cite{Ko:BCM} (see~\cite{BG:CHM}). 
%In this case the Kohn Laplacian is hypoelliptic with gain of 1 derivative. 

When the CR manifold $(M,H)$ is strictly pseudoconvex 
the condition $Y(q)$ means that we must have $0<q<n$.  In particular, this excludes all the $(p,q)$-forms in dimension 3. Thus, in the strictly pseudoconvex 
case, the operator $Q_{L}$ has an invertible principal symbol precisely on forms where the Kohn Laplacian has not an invertible principal symbol. 

When $(M,H)$ is not strictly pseudoconvex, but is weakly pseudoconvex, then the $Y(q)$-condition always fails. However, if $(M,H)$ has finite type and the Levi form has comparable 
eigenvalues then the Kohn Laplacian is hypoelliptic (see~\cite{Ko:MSHETCROBL} and the references therein). There also are examples of CR manifolds 
whose Levi form does not have comparable eigenvalues and for which the Kohn Laplacian still enjoys nice regularity properties (see, e.g.,~\cite{FKM:HECRMDLF}, 
\cite{NS:dCDBCn}). 

In contrast, the hypoellipticity of $Q_{L}$ on $(p,0)$-forms and $(p,n)$-forms is independent of any convexity property of the Levi form, 
since the sole non-vanishing of $\cL$ is enough to have maximal hypoellipticity. 

\section{Failure of Hypoellipticity on $\bH^{5}$}\label{sec:non-hypoellipticity}
In the previous section we saw that when restricted to $(p,0)$-forms and $(p,n)$-forms $Q_{L}$ is maximal hypoelliptic. In this section we would 
like to explain that that when restricted to other forms the operator $Q_{L}$ may fail to be hypoelliptic. To this end we shall look at the example of $Q_{L}$ on 
the 5-dimensional Heisenberg group $\bH^{5}$ acting on $(0,1)$-forms. 

Notice that the (localized versions) of the notions of hypoellipticity alluded to in the previous section all imply the following usual notion of hypoellipticity
\begin{equation}
    Q_{L}u\in C^{\infty} \Longrightarrow u \in C^{\infty}.
%     \label{¥}
\end{equation}
For homogeneous left-invariant differential operators on $\bH^{5}$ (and more generally on nilpotent graded Lie groups) this can be shown to be equivalent to maximal hypoellipticity 
(see~\cite{Fo:SEFNLG}). In this section we 
shall exhibit a $(0,1)$-form on $\bH^{5}$ which is singular at the origin and annihilates $Q_{L}$. This will prove that $Q_{L}$ is not hypoelliptic on $(0,1)$-forms. 

Throughout this section we will keep on using the notation introduced in Example B of Section~\ref{sec.CRARS} to describe the Heisenberg group. 
Thus $\bH^{5}$ is $\R\times \R^{4}$ equipped 
with the group 
law~(\ref{eq:real-structures.Heisenberg-law}). 
We let $X_{0},\ldots,X_{4}$ be the left-invariant vector fields defined by~(\ref{eq:Examples.Heisenberg-left-invariant-basis}). In this context $H$ is the vector bundle spanned by 
$X_{1},\ldots,X_{4}$, it complex structure $\cJ$ is such that $\cJ X_{j}=X_{2+j}$ and $\cJ X_{2+j}=-X_{j}$ for $j=1,2$, and $L$  
is the vector bundle spanned by $X_{1}$ and $X_{2}$. In addition, we equip $\bH^{5}$ with its Levi metric $g:=\theta^{2}+d\theta(.,\cJ.)$, 
where $\theta=dx^{0}+\sum_{j=1}^{2}(x^{j}dx^{2+j}-x^{2+j}dx^{j})$ is the 
standard contact form of $\bH^{5}$. With respect to this metric $X_{0},\ldots,X_{4}$ form an orthonormal frame of $T\bH^{5}$. 

In the sequel it will be convenient to introduce for $j=1,2$ the complex coordinates $z^{j}=x^{j}+ix^{2+j}$ and $z^{\bar{j}}=x^{j}-ix^{2+j}$, as well as the 
vector fields,
\begin{gather}
   Z_{j}=\frac{1}{\sqrt{2}}(X_{j}-iX_{2+j})=\frac{1}{\sqrt{2}}(\frac{\partial}{\partial z^{j}}+iz^{\bar{j}} \frac{\partial}{\partial x^{0}}),\\ 
   \quad Z_{\bar{j}}=\overline{Z_{j}}=\frac{1}{\sqrt{2}}(\frac{\partial}{\partial z^{\bar{j}}}-iz^{{j}} \frac{\partial}{\partial x^{0}}).
%     \label{¥}
\end{gather}
Then $\{Z_{1},Z_{2},Z_{\bar{1}},Z_{\bar{2}}\}$ is a left-invariant orthonormal frame of $H\otimes \C$. Because of the way the CR and real structures are defined in terms of the 
vector fields $X_{1},\ldots,X_{4}$, this orthonormal frame is admissible in the sense used in the previous section.

Let $\{\theta^{1},\theta^{2},\theta^{\bar{1}}, \theta^{\bar{2}}\}$ be the dual coframe of $\{Z_{1},Z_{2},Z_{\bar{1}},Z_{\bar{2}}\}$. In fact, we can 
check that  $\theta^{j}=\sqrt{2}dz^{j}$ and 
$\theta^{\bar{j}}=\sqrt{2}dz^{\bar{j}}$ for $j=1,2$. Since $\{Z_{1},Z_{2},Z_{\bar{1}},Z_{\bar{2}}\}$ is an admissible orthonormal frame Eq.~(\ref{eq:QL.local-formula}) holds. 
It actually 
holds without a $\op{O}_{H}(1)$ remainder term. Indeed, as $\dbarb=\sqrt{2}\sum \varepsilon(dz^{\bar{j}})Z_{\bar{j}}$ we can check that $\dbarb^{*}=-
\frac1{\sqrt{2}}\sum \iota(dz^{{j}})Z_{{j}}$, where $\iota(dz^{j})$ denotes the contraction by $\frac{d}{dz^{\bar{j}}}$ (it agrees  with that by 
$\sqrt{2}Z_{\bar{j}}$ on $H^{*}\otimes \C$). Following the lines of the proof of Proposition~\ref{prop:QL.local-form} we then see that no remainder terms 
are involved anymore. Thus, 
\begin{equation}
    Q_{L}=\sum_{j,k=1,2} \left( \varepsilon(dz^{\bar{j}})\iota(dz^{k})-\iota(dz^{j})\varepsilon(dz^{\bar{k}})\right)  
     ( Z_{\bar{j}}Z_{k}+Z_{j}Z_{\bar{k}}).
%     \label{¥}
\end{equation}

Next, observe that 
\begin{equation}
    \varepsilon(dz^{\bar{j}})\iota(dz^{k})dz^{\bar{l}}=\delta^{k\bar{l}}dz^{\bar{j}} \quad \text{and} \quad 
    \iota(dz^{j})\varepsilon(dz^{\bar{k}})=(1-\delta^{k\bar{l}})(\delta^{j\bar{k}}dz^{\bar{l}}-\delta^{j\bar{l}}dz^{\bar{k}}).
%     \label{}
\end{equation}
Using this we can check that, with respect to the frame $\{dz^{\bar{1}},dz^{\bar{2}}\}$, on $(0,1)$-forms $Q_{L}$ takes the form,
\begin{equation}
    Q_{L}=\left( 
    \begin{array}{cc}
        \Delta_{1}-\Delta_{2} &  T \\
        T&  \Delta_{2}-\Delta_{1}
    \end{array}\right),
%     \label{}
\end{equation}
where we have set $\Delta_{j}:=Z_{\bar{j}}Z_{j}+Z_{j}Z_{\bar{j}}$, $j=1,2$, and $T:=2(Z_{\bar{1}}Z_{2}+Z_{1}Z_{\bar{2}})$, and we also have used the fact 
that $[Z_{1},Z_{\bar{2}}]=[Z_{2},Z_{\bar{1}}]=0$. 

Let $\cF_{0}u:=\int_{-\infty}^{\infty} e^{-ix^{0}.\xi_{0}}udx^{0}$ denote the Fourier transform with respect to the variable $x_{0}$ on $\cS'$. We shall now look at 
$Q_{L}$ under $\cF_{0}$. Notice this is merely the same as looking at $Q_{L}$ under the irreducible representations of $\bH^{5}$. To this end  
we shall use the symbol $\hat{~}$ to denote the conjugation by $\cF_{0}$. We have 
\begin{gather}
     \hat{Z}_{j}=\frac{1}{\sqrt{2}}\cF_{0}(\frac{\partial}{\partial z^{j}}+iz^{\bar{j}} \frac{\partial}{\partial x^{0}})\cF_{0}^{-1}=
     \frac{1}{\sqrt{2}}(\frac{\partial}{\partial z^{j}}-z^{\bar{j}}\xi_{0}),\\
          \hat{Z}_{\bar{j}}=\frac{1}{\sqrt{2}}\cF_{0}(\frac{\partial}{\partial z^{\bar{j}}}-iz^{{j}} \frac{\partial}{\partial x^{0}})\cF_{0}^{-1}=
     \frac{1}{\sqrt{2}}(\frac{\partial}{\partial z^{\bar{j}}}+z^{{j}}\xi_{0}).
%     \label{¥}
\end{gather}
Using this we can check that $2\hat{Z}_{\bar{j}}\hat{Z}_{{j}}$ is equal to
\begin{multline}
  (\frac{\partial}{\partial z^{\bar{j}}}+z^{{j}}\xi_{0})(\frac{\partial}{\partial z^{j}}-z^{\bar{j}}\xi_{0})
  =\frac{\partial^{2}}{\partial z^{\bar{j}} \partial z^{{j}}} +\xi_{0}\left(-\frac{\partial}{\partial 
   z^{\bar{j}}}z^{\bar{j}}+z^{j}\frac{\partial}{\partial z^{{j}}}\right) -z^{j}z^{\bar{j}}\xi_{0}^{2} \\
    =H_{j}+\xi_{0}R_{j}-\xi_{0},
%     \label{¥}
\end{multline}
where we have set $H_{j}:=\frac{\partial^{2}}{\partial z^{\bar{j}} \partial z^{{j}}}-|z^{j}|^{2}\xi_{0}^{2}$ and 
$R_{j}:=z^{j}\frac{\partial}{\partial z^{{j}}}-z^{\bar{j}}\frac{\partial}{\partial z^{\bar{j}}}$. Similarly,
\begin{equation}
  2\hat{Z}_{{j}}\hat{Z}_{\bar{j}}=  H_{j}+\xi_{0}R_{j}+\xi_{0}.
%     \label{¥}
\end{equation}
We also have 
\begin{equation}
    \hat{T}=(\frac{\partial}{\partial z^{\bar{1}}}+z^{{1}}\xi_{0}) (\frac{\partial}{\partial z^{2}}-z^{\bar{2}}\xi_{0}) + 
    (\frac{\partial}{\partial z^{1}}-z^{\bar{1}}\xi_{0})(\frac{\partial}{\partial z^{\bar{2}}}+z^{{2}}\xi_{0}).
     \label{eq:QLH5.hatT}
\end{equation}
Therefore, on $(0,1)$-forms we have 
\begin{equation}
    Q_{L}=\left( 
    \begin{array}{cc}
       H_{1}-H_{2} +\xi_{0}(R_{1}-R_{2})&  \hat{T} \\
        \hat{T}&  H_{2}-H_{1} +\xi_{0}(R_{2}-R_{1})
    \end{array}\right),
     \label{eq:QLH5.hatQL}
\end{equation}
with $\hat{T}$ given by~(\ref{eq:QLH5.hatT}). 

In the sequel we set $z=(z^{1},z^{2})$ and we consider the $(0,1)$-form,
\begin{equation}
\hat{\omega}:=\hat{u} dz^{\bar{1}}, \qquad   \hat{u}(\xi_{0},z):=\exp\left(-|\xi_{0}| |z|^{2}\right).
%     \label{¥}
\end{equation}
Notice that $\hat{u}$ is a ground state for the harmonic oscillators $H_{j}$ and annihilates the rotation generators $R_{j}$, namely, 
\begin{equation}
    H_{1}\hat{u}=H_{2}\hat{u}=|\xi_{0}|\hat{u} \quad \text{and} \quad  R_{1}\hat{u}=R_{2}\hat{u}=0.
\end{equation}
In addition $\hat{u}$ also annihilates $(\frac{\partial}{\partial z^{j}}-z^{\bar{j}}\xi_{0})\hat{u}$ for $\xi_{0}\geq 0$ and $(\frac{\partial}{\partial 
z^{\bar{j}}}+z^{{j}}\xi_{0})\hat{u}=0$ for $\xi_{0}\leq 0$, and so using~\ref{eq:QLH5.hatT} we see that
\begin{equation}
    \hat{T}\hat{u}=0.
%     \label{}
\end{equation}
Combining all this with~(\ref{eq:QLH5.hatQL}) we get
\begin{equation}
    \hat{Q}_{L}\hat{\omega}=\left(H_{1}-H_{2} +\xi_{0}(R_{1}-R_{2})\right)\hat{u}dz^{\bar{1}}+\hat{T}\hat{u}dz^{\bar{2}}=0.
     \label{eq:QLH5.hatQLhatu}
\end{equation}

Next, the inverse transform $ u:=\cF^{-1}_{0}\hat{u}$ is equal to
\begin{multline}
   \frac{1}{2\pi}\int_{-\infty}^{\infty}e^{ix^{0}.\xi_{0}}e^{-|z|^{2}|\xi_{0}|}d\xi_{0}= \frac{1}{2\pi}\left(\int_{0}^{\infty}e^{ix^{0}.\xi_{0}}e^{-|z|^{2}\xi_{0}}d\xi_{0}+  
    \int_{0}^{\infty}e^{-ix^{0}.\xi_{0}}e^{-|z|^{2}\xi_{0}}d\xi_{0} \right)\nonumber \\
    =  \frac{1}{2\pi}\left( \frac{1}{ix^{0}-|z|^{2}} +  \frac{1}{-i x^{0}-|z|^{2}}\right)
    = \frac{-1}{\pi}\frac{|z|^{2}}{|x^{0}|^{2}+|z|^{4}}. 
%     \label{¥}
\end{multline}
Notice that $u$ is homogeneous of degree $-2$ with respect to the dilations~(\ref{eq:RealStructure.Heisenberg-dilations}). In particular $u$ is singular at the origin. 

Set $\omega=\cF_{0}^{-1}\hat{\omega}= udz^{\bar{1}}$. In view of~(\ref{eq:QLH5.hatQLhatu}) we have 
\begin{equation}
    Q_{L}\omega=\cF_{0}^{-1}\hat{Q}_{L}\hat{\omega}=0.
%     \label{¥}
\end{equation}
Therefore, we see that, although $\omega$ is  not smooth at the origin, $Q_{L}\omega$ is smooth everywhere. This shows that $Q_{L}$ is not hypoelliptic on 
$(0,1)$-forms. 

In fact, the same arguments as above also show that the forms $udz^{1}\wedge dz^{\bar{1}}$ and $udz^{1}\wedge dz^{2}\wedge dz^{\bar{1}}$ too 
annihilate $Q_{L}$. Therefore $Q_{L}$ is not hypoelliptic on $(1,1)$-forms or $(2,1)$-forms. This shows that it is \emph{only} on $(p,0)$-forms and $(p,2)$-forms 
that $Q_{L}$ is hypoelliptic.

\end{document}